\newtheorem{thm}{Theorem}[section]
\newtheorem{prop}[thm]{Proposition}
\newtheorem{lem}[thm]{Lemma}
\newtheorem{cor}[thm]{Corollary}
\newtheorem{rem}[thm]{Remark}
\newtheorem{ass}[thm]{Assumption}
\newenvironment{altproof}[1]
{\noindent
{\em Proof of {#1}}.}
{\nopagebreak\mbox{}\hfill $\Box$\par\addvspace{0.5cm}}
\newcommand\mytop[2]{\genfrac{}{}{0pt}{}{#1}{#2}}
\def\Z{\mathbb{Z}}
\def\R{\mathbb{R}}
\newcommand{\cA}{{\mathcal A}}
\newcommand{\cC}{{\mathcal C}}
\newcommand{\cF}{{\mathcal F}}
\newcommand{\cU}{{\mathcal U}}
\newcommand{\eps}{\varepsilon}
\newcommand{\de}{\delta}
\newcommand{\ka}{\kappa}
\newcommand{\om}{\omega}
\newcommand{\si}{\sigma}
\newcommand{\Ga}{\Gamma}
\newcommand{\De}{\Delta}
\newcommand{\Om}{\Omega}
\newcommand{\ham}{H_\Omega}
\newcommand{\conf}{\cF_N\Omega}
\newcommand{\pa}{\partial}
\newcommand\rot[1]{{\mathrm{Rot}}(W_{#1}(t;w);[0,T(c)])}
\numberwithin{equation}{section}
\DeclareMathOperator{\dist}{dist}
\begin{document}

\title{Periodic solutions with prescribed minimal period of vortex type problems in domains}
\author{Thomas Bartsch \and Matteo Sacchet}

\date{}
\maketitle

\begin{abstract}
We consider Hamiltonian systems with two degrees of freedom of point vortex type
\[
  \kappa_j \dot{z}_j = J \nabla_{z_j} H_\Omega(z_1,z_2), \quad j=1,2,
\]
for $z_1,z_2$ in a domain $\Omega\subset\mathbb{R}^2$. In the classical point vortex context the Hamiltonian $H_\Omega$ is of the form
\[
  H_\Omega(z_1,z_2) = -\frac{\kappa_1 \kappa_2}{\pi} \log |z_1-z_2| - 2\kappa_1 \kappa_2g(z_1,z_2) - \kappa_1^2 h(z_1) - \kappa_2^2 h(z_2),
\]
where $g:\Omega\times\Omega\to\mathbb{R}$ is the regular part of a hydrodynamic Green's function in $\Omega$, $h:\Omega\to\mathbb{R}$ is the Robin function: $h(z)=g(z,z)$, and $\kappa_1$, $\kappa_2$ are the vortex strengths. 
We prove the existence of infinitely many periodic solutions with prescribed minimal period that are superpositions of a slow motion of the center of vorticity close to a star-shaped level line of $h$ and of a fast rotation of the two vortices around their center of vorticity. The proofs are based on a recent higher dimensional version of the Poincar\'e-Birkhoff theorem due to Fonda and Ure\~na.
\end{abstract}

{\bf MSC 2010:} Primary: 37J45; Secondary: 34C25, 37E40, 37N10, 76B47

{\bf Key words:} vortex dynamics; singular first order Hamiltonian systems; periodic solutions; higher dimensional Poincar\'e-Birkhoff theorem

%
%

\section{Introduction}

Given a domain $\Om\subset \R^2$, the dynamics of $N$ point vortices $z_1(t), \ldots, z_N(t) \in \Om$ with vortex strengths $\ka_1,\dots,\ka_N\in\R$ is described by a Hamiltonian system
\begin{equation} \label{eq:general}
\ka_j \dot{z}_j = J \nabla_{z_j} \ham(z_1,\ldots,z_N), \quad j=1,\ldots, N;
\end{equation}
here
$
 J=\begin{pmatrix}
 0&1\\
 -1 & 0
 \end{pmatrix}
$
is the standard symplectic matrix in $\R^2$. The Hamiltonian is of the form
\[
  \ham(z_1,\ldots,z_N) = -\frac{1}{2\pi} \sum_{\mytop{j,k=1}{j\not=k}}^N \ka_j \ka_k \log|z_j-z_k| - F(z_1,\ldots,z_N)
\]
where $F:\Om^N \to \R$ is a function of class $\cC^2$. The Hamiltonian is defined on the configuration space
\[
  \conf = \left\{ (z_1,\ldots,z_N) \in \Om^N: z_j\not=z_k \mbox{ for } j\not=k \right\}.
\]
Observe that the system is singular, but of a very different type compared with the singular second order equations from celestial mechanics.

Systems like \eqref{eq:general} arise as a singular limit problem in Fluid Mechanics. A model for an incompressible, non-viscous fluid in $\Om$ with solid boundary is given by the two dimensional Euler equations
\[
  \left\{
  \begin{aligned}
  &v_t + (v\cdot\nabla) v = -\nabla P,\quad \nabla \cdot v = 0\qquad \text{in }\Om,\\
  &v\cdot\nu = 0 \qquad\text{on }\pa\Om,
  \end{aligned}
  \right.
\]
in which $v(t,x)\in\R^2$ represents the velocity of the fluid and $P(t,x)\in\R$ its pressure; $\nu$ denotes the exterior normal to the domain. Making a point vortex ansatz $\om = \sum_{j=1}^N \ka_j \de_{z_j}$, where $\de_{z_j}$ is the Dirac delta, for the scalar vorticity
$\om = \nabla \times v = \pa_1 v_2 - \pa_2 v_1$, one is led to system \eqref{eq:general}; see \cite{Marchioro-Pulvirenti:1994}.

Classically the point vortex equations \eqref{eq:general} were first derived by Kirchhoff in \cite{Kirchhoff:1876}, who considered the case where $\Om=\R^2$ is the whole plane. In this case the function $F$ in the Hamiltonian is identically zero. On the other hand, when $\Om\ne\R^2$, one has to take account of the boundaries of the domain which leads to
\[
  F(z_1,\dots,z_N) = \sum_{j,k=1}^N \ka_j \ka_k g(z_j,z_k)
\]
where $g:\Om\times\Om \to \R$ is the regular part of a hydrodynamic Green's function in $\Om$. An important role plays the Robin function $h:\Om\to\R$ defined by $h(z)=g(z,z)$. In fact, a single vortex $z(t)\in\Om$ moves along level lines of $h$ according to the Hamiltonian system $\dot{z}=\ka J\nabla h(z)$. This goes back to work of Routh \cite{Routh:1881} and Lin \cite{Lin:1941_1,Lin:1941_2}. The Green function, hence the Hamiltonian $\ham$ is explicitly known only for a few special domains. Moreover $\ham$ is not bounded from above nor from below, and its level sets are not compact, except when $N=2$ and the vortex strengths have different signs. Finally the system \eqref{eq:general} is not integrable in general; see \cite[Section~3.4]{Newton:2001} and \cite{Zannetti-Franzese:1994}. We refer the reader to \cite{Majda-Bertozzi:2001, Marchioro-Pulvirenti:1994, Newton:2001, Saffmann:1995} for modern presentations of the point vortex method.

It is worthwhile to mention that systems like \eqref{eq:general} also arise in other contexts from mathematical physics, e.g.\ in models from superconductivity (Ginzburg-Landau-Schr\"odin\-ger equation), or in equations modeling the dynamics of a magnetic vortex system in a thin ferromagnetic film (Landau-Lifshitz-Gilbert equation); see \cite{Bartsch-Gebhard:2016} for references to the literature. The domain can also be a subset of a two-dimensional surface.

Many authors worked on this problem, mostly in the case $\Om=\R^2$ with $F$=0. In the presence of boundaries much less is known, except in the case of special domains like the half plane or a radially symmetric domain, i.e.\ disk or annulus, when the Green's function is explicitly known. In the case of two vortices and $\ka_1\ka_2<0$ the Hamiltonian is bounded below and satisfies $\ham(z_1,z_2)\to\infty$ as $z=(z_1,z_2)\to\pa\conf$. Consequently all level surfaces of $\ham$ are compact, and standard results about Hamiltonian systems apply. In particular, by a result of Struwe \cite{Struwe:1990} almost every level surface contains periodic solutions. Another simple setting is the case of $\Om$ being radially symmetric and $N=2$ whence the system \eqref{eq:general} is integrable and can be analyzed in detail. For $\Om$ being a disk this has been done in \cite{Dai:2014}.

Except in the above mentioned special cases even the existence of equilibrium solutions of \eqref{eq:general} is difficult to prove; see \cite{Bartsch-Pistoia-Weth:2010,Bartsch-Pistoia:2015}. The problem of finding periodic solutions in a general domain has only recently been addressed in the papers \cite{Bartsch:2016}--\cite{Bartsch-Gebhard:2016} where several one parameter families of periodic solutions of the general $N$-vortex problem \eqref{eq:general} have been found. The solutions found in \cite{Bartsch:2016,Bartsch-Dai:2016,Bartsch-Gebhard:2016} rotate around their center of vorticity, which is situated near a stable critical point of the Robin function $h$. The periods tend to zero as the solutions approach the critical point of $h$. Recall that $h(z)\to\infty$ as $z\to\pa\Om$, hence $h$ always has a minimum. It may have arbitrarily many critical points. For a generic domain all critical points are non-degenerate (see \cite{Micheletti-Pistoia:2014}), hence in this case the results from \cite{Bartsch:2016,Bartsch-Dai:2016,Bartsch-Gebhard:2016} produce many one-parameter families of periodic solutions. Moreover, these solutions lie on global continua that are obtained via an equivariant degree theory for gradient maps. A different type of periodic solutions has been discovered in \cite{Bartsch-Dai-Gebhard:2016}. There the solutions are choreographies where the vortices move near a compact component of the boundary $\pa\Om$ almost following a level line $h^{-1}(c)$ with $c\gg1$.

In the present paper we consider \eqref{eq:general} for $N=2$ vortices in a domain $\Om\subsetneq\R^2$. We find a new type of solutions that are not (necessarily) located near an equilibrium of $h$ but where the two vortices are close to a level line of $h$. More precisely, the solutions that we obtain are essentially superpositions of a slow motion of the center of vorticity along some level line $h^{-1}(c)$ of $h$, and of a fast rotation of the two vortices around their center of vorticity. This will be described in detail. These solutions are of a very different nature from those obtained in \cite{Bartsch:2016}--\cite{Bartsch-Gebhard:2016}. The main geometric assumption is that $h^{-1}(c)$ is strictly star-shaped. Our proofs are based on a recent generalization of the Poincar\'{e}-Birkhoff theorem due to Fonda-Ure\~{n}a \cite{Fonda-Urena:2016}.

The paper is organized as follows. In Section~\ref{sec:res} we state and discuss our results about the existence and shape of periodic solutions of \eqref{eq:general}. In Section~\ref{sec:pf-main} we prove the main Theorem~\ref{thm:main} about the existence of a periodic solution by an application of \cite[Theorem~1.2]{Fonda-Urena:2016}. This requires the computation of certain rotation numbers which will be done in Section~\ref{sec:pf-rot}. The results about the shape of our solutions will be proved in Section~\ref{sec:fastrot}. In the last Section~\ref{sec:pf-rem} we prove various consequences of Theorem~\ref{thm:main} and its proof.

%
%

\section{Statement of results}\label{sec:res}

We consider the Hamiltonian system
\begin{equation}\label{eq:main}
  \ka_j \dot{z}_j = J \nabla_{z_j} \ham(z_1,z_2), \quad j=1,2,
\end{equation}
on a domain $\Om\subset\R^2$ with Hamilton function
\[
  \ham(z_1,z_2) = -\frac{\ka_1 \ka_2}{\pi} \log|z_1-z_2| - 2 \ka_1 \ka_2 g(z_1,z_2) - \ka_1^2 h(z_1) - \ka_2^2 h(z_2)
\]
where $g:\Om\times\Om\to\R$ can be any symmetric $\cC^2$ function, and $h:\Om\to\R$ is defined by $h(z)=g(z,z)$. The parameters $\ka_1,\ka_2\in\R\setminus\{0\}$ have to satisfy $\ka_1+\ka_2\ne0$. We will continue to refer to $z_1$, $z_2$ as point vortices, even though our results are valid in a more general setting.

Let $\cC_c\subset h^{-1}(c)$ be a non-constant periodic trajectory of the one degree of freedom Hamiltonian system
\begin{equation} \label{eq:cent-vort}
  \dot{z} = - (\ka_1 + \ka_2) J\nabla h(z)
\end{equation}
on the level $c\in\R$. Then $\nabla h(z)\ne0$ for every $z\in\cC_c$, hence there exists a neighbourhood $\cU(\cC_c)\subset\Om$ of $\cC_c$ and $c_0<c<d_0$ so that
\[
  \cC_d := \{z\in\cU(\cC_c): h(z)=d\},\quad c_0\le d\le d_0,
\]
is also the trajectory of a non-constant periodic solution of \eqref{eq:cent-vort}. Let $T(d)>0$ be the minimal period of $\cC_d$. Observe that system \eqref{eq:cent-vort} describes the motion of one vortex in $\Om$ with strength $\ka=\ka_1+\ka_2$.

We need one geometric assumption on $h$. A periodic trajectory $\cC$, or any closed $C^1$ curve $\cC\subset\R^2$, is said to be strictly star-shaped if there exists $z_0\in\R^2$ such that for each $w\in S^1\subset\R^2$ the ray $z_0+\R^+w=\{z_0+tw:t\ge0\}$ intersects $\cC$ in precisely one point, and the intersection is transversal.

\begin{ass}\label{ass}
The periodic trajectories $\cC_d = h^{-1}(d)\cap\cU(\cC_c)$, $c_0\le d\le d_0$, of \eqref{eq:cent-vort} are strictly star-shaped. The map $T:[c_0,d_0]\to\R$, $d\mapsto T(d)$, is strictly monotone.
\end{ass}

Clearly, if $\cC_c$ is strictly star-shaped then so is $\cC_d$ for $d$ close to $c$. Observe that we do not require that $\cC_d$ is the boundary of a strictly star-shaped set in $\Om$. Below we shall provide several examples of domains where Assumption \ref{ass} holds with $h$ being the Robin function. In order to state our result recall the action integral for a $T(c)$-periodic function:
\[
  \cA(z) = \frac12 \sum_{j=1}^{2}\int_{0}^{T(c)}\ka_j\langle \dot{z}_j(t), Jz_j(t)\rangle\, dt - \int_{0}^{T(c)} \ham(z(t))\,dt.
\]
The main result of the paper is the following.

\begin{thm}\label{thm:main}
  Suppose $\ka_1,\ka_2,\ka_1+\ka_2\not=0$ and that Assumption~\ref{ass} holds. Then the system \eqref{eq:main} has a sequence of periodic solutions $z^{(n)}(t)$ with minimal period $T(c)$. These satisfy the following properties.
  \begin{itemize}
\item[a)] The center of vorticity $\displaystyle C^{(n)}(t) := \frac{\ka_1}{\ka_1+\ka_2}z^{(n)}_1(t) + \frac{\ka_2}{\ka_1+\ka_2}z^{(n)}_2(t)$ converges uniformly in $t$ as $n\to\infty$ towards a solution $C(t)$ of \eqref{eq:cent-vort} with $C(t)\in\cC_c$.
  \item[b)] $\|z^{(n)}_1-z^{(n)}_2\|_\infty\to0$ as $n\to\infty$, hence $z^{(n)}_1(t),z^{(n)}_2(t)\to C(t)$ uniformly in $t\in [0,T(c)]$.
  \item[c)] Consider the difference
      $\displaystyle D^{(n)}(t) := z^{(n)}_1(t) - z^{(n)}_2(t) = \rho^{(n)}(t)\left(\cos\theta^{(n)}(t),\sin\theta^{(n)}(t)\right)$ in polar coordinates and set $d_n=\big|z^{(n)}_1(0) - z^{(n)}_2(0)\big|$. Then the angular velocity $\dot\theta^{(n)}$ satisfies
      \[
        d_n^2\dot\theta^{(n)}(t) = \frac{\ka_1 \ka_2}{\pi} + o(1) \quad\text{as }n\to\infty \qquad\text{uniformly in $t$.}
      \]
  \item[d)] The action of the solution satisfies $\cA(z^{(n)}) \to -\si\infty$ as $n\to\infty$, where $\si=\mathrm{sgn}(\ka_1\ka_2)$ is the sign of $\ka_1\ka_2$.
\end{itemize}
\end{thm}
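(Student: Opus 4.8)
The plan is to pass to center-of-vorticity and relative coordinates and then invoke the higher dimensional Poincar\'e--Birkhoff theorem of Fonda--Ure\~na \cite{Fonda-Urena:2016} on a product of two planar annuli. Writing $\ka=\ka_1+\ka_2$, $C=\frac{\ka_1}{\ka}z_1+\frac{\ka_2}{\ka}z_2$ and $D=z_1-z_2$, so that $z_1=C+\frac{\ka_2}{\ka}D$ and $z_2=C-\frac{\ka_1}{\ka}D$, a direct computation shows that the symplectic structure underlying \eqref{eq:main} transforms into a product structure, with weight $\ka$ on the $C$-plane and weight $\frac{\ka_1\ka_2}{\ka}$ on the $D$-plane, the mixed terms cancelling. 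In these variables $\ham=-\frac{\ka_1\ka_2}{\pi}\log|D|+W(C,D)$ with $W$ of class $\cC^2$ and bounded on the relevant region. As $|D|\to0$ one finds $\nabla_C\ham\to-\ka^2\nabla h(C)$, so that $\ka\dot C=J\nabla_C\ham$ reduces to \eqref{eq:cent-vort}, whereas the singular term forces the relative equation $\frac{\ka_1\ka_2}{\ka}\dot D=J\nabla_D\ham$ to rotate $D$ fast. This is exactly the superposition structure in the statement.

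Next I would build the two annuli. For the center, Assumption~\ref{ass} and continuity let me fix $c_0<c<d_0$ close to $c$ with a common star-center $z_0$, and take $\cA_{\mathrm{ctr}}$ to be the annular region bounded by $\cC_{c_0}$ and $\cC_{d_0}$. A solution of \eqref{eq:cent-vort} on $\cC_d$ winds once around $z_0$ in time $T(d)$, so over the fixed time $T(c)$ its angular rotation number is $T(c)/T(d)$; since $T$ is strictly monotone, these numbers lie on opposite sides of $1$ on the two boundary curves, giving the twist for the target integer $1$. For the relative motion I take $\cA_{\mathrm{rel}}^{(n)}=\{a\,n^{-1/2}\le|D|\le b\,n^{-1/2}\}$ with fixed $0<a<b$. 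Isolating the leading $\log$-term in $\frac{\ka_1\ka_2}{\ka}\dot D=J\nabla_D\ham$ shows that the angular velocity of $D$ has order $|D|^{-2}$ with a definite sign, so the relative rotation number over $[0,T(c)]$ has order $n$ and is strictly monotone across the annulus; hence for each large $n$ there is an integer $\nu^{(n)}$ strictly between its two boundary values, with $\nu^{(n)}\to\infty$.

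The remaining hypothesis is that the time-$T(c)$ flow exists on $\cA_{\mathrm{ctr}}\times\cA_{\mathrm{rel}}^{(n)}$ and that the rotation numbers of boundary solutions stay on the prescribed sides of $1$ and of $\nu^{(n)}$. The key a priori estimate comes from conservation of $\ham$: along any solution $\log|D(t)|$ differs from $\log|D(0)|$ by $\frac{\pi}{\ka_1\ka_2}\big(W(t)-W(0)\big)$, so $|D(t)|$ stays within a fixed multiplicative factor of $|D(0)|$. Choosing the ratio $b/a$ larger than this factor keeps solutions inside $\conf$ and bounded away from the collision set $\{D=0\}$, so the flow is well defined, and it makes the boundary rotation numbers computable: the center part is an $O(|D|)=O(n^{-1/2})$ perturbation of \eqref{eq:cent-vort}, so its rotation number equals $T(c)/T(d)+o(1)$ and still straddles $1$ (one lets $[c_0,d_0]$ shrink to $c$ more slowly than $n^{-1/2}$, so the twist margin beats the perturbation), while the relative rotation numbers on the two boundaries remain on opposite sides of $\nu^{(n)}$. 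After rescaling the coordinates to normalise the symplectic weights, and tracking the orientations fixed by the signs of $\ka$ and $\ka_1\ka_2$, \cite[Theorem~1.2]{Fonda-Urena:2016} yields a $T(c)$-periodic solution $z^{(n)}$ whose center winds once and whose relative angle winds $\nu^{(n)}$ times.

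Finally I would read off a)--d). Since the center winds exactly once around the Jordan curve near $\cC_c$, any period dividing $T(c)$ forces integer center winding, so the minimal period is precisely $T(c)$. For a), the center solves a perturbation of \eqref{eq:cent-vort} trapped between $\cC_{c_0}$ and $\cC_{d_0}$; letting these pinch down onto $\cC_c$ and applying Arzel\`a--Ascoli gives uniform convergence to a solution lying on $\cC_c$. Statement b) follows from $\max_t|D^{(n)}(t)|\le b\,n^{-1/2}(1+o(1))\to0$. For c), the energy identity refines to $W(t)\to-\ka^2 c$ uniformly in $t$ as $n\to\infty$, whence $|D(t)|/d_n\to1$ uniformly, and feeding this into the leading interaction term of $\dot D=\dot z_1-\dot z_2$ gives the stated asymptotics for $d_n^2\dot\theta^{(n)}$. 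For d), the action is dominated by $-\int_0^{T(c)}\ham\,dt$, whose singular part $\frac{\ka_1\ka_2}{\pi}\int_0^{T(c)}\log|D|\,dt\sim\frac{\ka_1\ka_2}{\pi}\,T(c)\log(n^{-1/2})$ tends to $-\si\infty$, while the symplectic contributions (bounded since $\langle\dot D,JD\rangle=-|D|^2\dot\theta$ stays bounded) and $\int W\,dt$ remain bounded. The main obstacle is precisely the uniform control of the coupled slow--fast dynamics in the third paragraph: the energy estimate confining $|D(t)|$ is what simultaneously rules out collisions, keeps solutions in $\conf$, and renders both rotation numbers computable, and ensuring the twist margins survive the coupling as $n\to\infty$ is the delicate point.
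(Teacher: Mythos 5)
Your proposal is correct and follows essentially the same route as the paper: the passage to center-of-vorticity and difference coordinates (the paper's transformation $w=Az$), the energy-conservation confinement of $|D|$ (Lemma~\ref{lem:w1uniformly}), the two twist conditions --- $T(c)/T(d)$ straddling $1$ for the slow variable and the $|D|^{-2}$ angular blow-up for the fast one (Proposition~\ref{prop:rot}) --- and the application of \cite[Theorem~1.2]{Fonda-Urena:2016} on a shrinking product of annuli. The only deviations are cosmetic: you prove part~c) via the conserved-energy identity rather than the paper's rescaling $u_n(s)=d_n^{-1}D^{(n)}(d_n^2s)$, you choose the fast annulus with a fixed radius ratio instead of the paper's adaptive choice $a_1\ll b_1$ (your unneeded claim that the fast rotation number is monotone across the annulus can simply be dropped, since the order-$n$ gap between the boundary values already yields the integer $\nu^{(n)}$), and you make the minimal-period argument explicit.
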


\begin{rem} \label{rem:betato0}\rm
  a) This result can be interpreted as follows, using the notation of Theorem \ref{thm:main}. The solutions
  \[
    z^{(n)}_1(t) = C^{(n)}(t)+ \frac{\ka_2}{\ka_1+\ka_2} D^{(n)}(t) \quad\text{and}\quad
    z^{(n)}_2(t) = C^{(n)}(t)- \frac{\ka_1}{\ka_1+\ka_2}D^{(n)}(t)
  \]
  are superpositions of a slow motion of the center of vorticity with minimal period $T(c)$, and of a fast rotation of the two vortices around their center of vorticity. The trajectory of the center of vorticity converges (as $n\to\infty$) towards the level line $\cC_c$ of $h$. The angular velocity of the two vortices around their center of vorticity is asymptotic to $\frac{\ka_1\ka_2}{d_n^2\pi}$ as $d_n\to0$ where $d_n$ is the distance of the initial positions of the two vortices. The rotation number of $z^{(n)}_1(t)-z^{(n)}_2(t)$ in $[0,T(c)]$ is asymptotic to $\frac{|\ka_1\ka_2|T}{2\pi^2d_n^2}$ and tends to infinity as $d_n \to 0$.

  b) In the case $\ka_1\ka_2<0$ the center of vorticity does not lie between the two vortices. If $\ka_1+\ka_2$ is close to $0$ then the two vortices are relatively far away from their center of vorticity, compared with their distance from each other.

  c) Clearly the theorem holds for any $\tilde{c}\in(c_0,d_0)$ instead of $c$.

  d) If $\ka_1\ka_2<0$ then $H_\Om(z)\to\infty$ as $z\to\pa\cF_2(\Om)$, hence the level surfaces $H_\Om^{-1}(c)$ are compact. Therefore a result of Struwe \cite[Theorem~1.1]{Struwe:1990} can be applied and yields that for almost every $c>\inf H_\Om$ there exists a periodic solution of \eqref{eq:main} on $H_\Om^{-1}(c)$. Even in that case Theorem~\ref{thm:main} is new in that we localize the solutions and describe their shape.

  e) It is an interesting problem whether it is possible to weaken or to drop the condition that $\cC_c$ is strictly star-shaped. We refer the reader to \cite{Ding:1983,LeCalvez-Wang:2010,Martins-Urena:2007} for results and discussions of this delicate issue in the setting of the Poincar\'e-Birkhoff fixed point theorem for non-autonomous one degree of freedom Hamiltonian systems. Although star-shapedness is essential for the multidimensional Poincar\'e-Birkhoff fixed point theorem \cite[Theorem~1.2]{Fonda-Urena:2016} we believe that it is not essential in our special case; see also \cite{Fonda-Sabatini-Zanolin:2012}.

  f) It is also an interesting problem to consider more than two vortices. One might conjecture that, given a periodic solution $Z_j(t)=e^{-\om Jt}z_j$, $j=1,\dots,N$, $\om\in\R$, $z_1,\dots,z_N\in\R^2$, of the Hamiltonian system
  \[
    \dot{Z}_j =  -\frac{1}{2\pi} \sum_{\mytop{k=1}{k\not=j}}^N \ka_k \frac{J(Z_j-Z_k)}{|Z_j-Z_k|^2},\qquad j=1,\dots,N,
  \]
  in the plane, there exist solutions $z_j(t)\in\Om$, $j=1,\dots,N$, of the shape
  \[
    z_j(t) = C(t)+rZ_j(t/r^2)+o(r)\quad \text{as $r\to0$},\ j=1,\dots,N.
  \]
  Here $C(t)$ is a periodic solution of the Hamiltonian system $\dot{C} = -\ka J\nabla h(C)$, where $\ka=\sum_{j=1}^{N}\ka_j$ is the total vorticity. Such a result has been proved in \cite{Bartsch:2016,Bartsch-Dai:2016,Bartsch-Gebhard:2016} in the case when $Z(t)\equiv a_0\in\Om$ is an equilibrium, i.e.\ when $a_0\in\Om$ is a critical point of the Robin function $h$. The methods from these papers do not seem to be applicable, however, when $C(t)$ has minimal period $T>0$. Since the minimal period of $Z(t)$ is $2\pi/\om$, the superposition $C(t)+rZ_j(t/r^2)$ is periodic or quasiperiodic depending on whether or not $2\pi r^2/\om T$ is rational.
\end{rem}

It is easy to construct functions $g$ on an arbitrary domain $\Om\subset\R^2$ so that the assumptions of Theorem~\ref{thm:main} hold. We shall now present several examples where these assumptions can be verified for $g$ being the regular part of a hydrodynamic Green function and $h$ being the associated Robin function.

Let us begin with the case of a bounded convex domain $\Om$. It is well known that the Robin function $h:\Om\to\R$ is strictly convex and that it has a unique non-degenerate minimum $z_0$, the harmonic center of $\Om$ (see \cite{Caffarelli-Friedman:1985}). Moreover $h(z)\to\infty$ as $z\to\pa\Om$. We set $m(\Om):=h(z_0)=\min h$. The level sets $h^{-1}(c)$ with $c>m(\Om)$ are connected and strictly star-shaped with respect to $z_0$. For $c>m(\Om)$ we may therefore define $T(c)$ to be the minimal period of the solution of \eqref{eq:cent-vort} with trajectory $h^{-1}(c)$. The following lemma shows that the assumptions of Theorem~\ref{thm:main} are satisfied.

\begin{lem}\label{lem:T(c)}
  For a bounded convex domain $\Om$ the function $(m(\Om),\infty)\to\R$, $c\mapsto T(c)$, defined above is strictly decreasing with $\displaystyle T(m(\Om)):=\lim_{c\to m(\Om)}T(c)=\frac{2\pi}{|\ka_1+\ka_2|\sqrt{\det h''(z_0)}}$ and $T(c)\to0$ as $c\to\infty$. Here $z_0$ is the harmonic center of $\Om$.
\end{lem}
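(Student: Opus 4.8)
The plan is to reduce the period to a geometric quantity attached to the level lines of $h$ and then to study its dependence on the level. Writing $\kappa=\kappa_1+\kappa_2\ne0$, equation \eqref{eq:cent-vort} is the Hamiltonian system with Hamiltonian $-\kappa h$, so $h$ is a first integral and the orbit through a point of $h^{-1}(d)$ runs once around the whole (convex, hence star-shaped) level line. Since $|\dot z|=|\kappa|\,|\nabla h|$ and $\nabla h\ne0$ off the minimum $z_0$, the coarea formula gives
\[
  T(d)=\frac1{|\kappa|}\oint_{h^{-1}(d)}\frac{ds}{|\nabla h|}=\frac{A'(d)}{|\kappa|},\qquad A(d):=\big|\{z\in\Om:h(z)<d\}\big|,
\]
the second equality being the classical identity between the period and the enclosed area $A(d)$. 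Thus everything is governed by the single $C^1$ function $A$: strict monotonicity of $T$ means strict concavity (or convexity) of $A$, and the two limiting values of $T$ correspond to the boundary behaviour of $A'$.

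First I would treat the limit $d\to m(\Om)$. Near the non-degenerate minimum $z_0$ a Taylor expansion (or the Morse lemma) shows that $\{h\le d\}$ is, to leading order, the ellipse $\{(z-z_0)^\top h''(z_0)(z-z_0)\le 2(d-m(\Om))\}$, whose area is $\frac{2\pi (d-m(\Om))}{\sqrt{\det h''(z_0)}}$. Hence $A'(d)\to \frac{2\pi}{\sqrt{\det h''(z_0)}}$ and $T(m(\Om))=\frac{2\pi}{|\kappa|\sqrt{\det h''(z_0)}}$, which is precisely the period of the linearisation $\dot\zeta=-\kappa J h''(z_0)\zeta$ of \eqref{eq:cent-vort} at $z_0$.

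For the monotonicity I would pass to polar coordinates centred at $z_0$, which is legitimate because the level lines are strictly star-shaped with respect to $z_0$. Writing $\eta_\phi(\rho):=h(z_0+\rho(\cos\phi,\sin\phi))$, the level line $h^{-1}(d)$ is the graph $\rho=\rho_d(\phi):=\eta_\phi^{-1}(d)$, and computing $\dot\phi$ along the flow turns the period into
\[
  T(d)=\frac1{|\kappa|}\int_0^{2\pi}\frac{\rho_d(\phi)}{\eta_\phi'(\rho_d(\phi))}\,d\phi .
\]
Using $\partial_d\rho_d=1/\eta_\phi'$ one finds $\partial_d\big(\rho/\eta_\phi'\big)=\big(\eta_\phi'-\rho\,\eta_\phi''\big)/(\eta_\phi')^3$, so $T$ is strictly decreasing as soon as, along every ray from $z_0$,
\[
  \rho\,\eta_\phi''(\rho)>\eta_\phi'(\rho),\qquad\text{equivalently}\qquad \rho\mapsto \eta_\phi'(\rho)/\rho\ \text{ is strictly increasing.}
\]
Here $\eta_\phi$ is strictly convex with $\eta_\phi'(0)=0$, so this says that the radial profile of $h$ is strictly ``super\-quadratic''; it holds, e.g., whenever $\eta_\phi''$ is itself increasing, since then $\int_0^\rho\eta_\phi''<\rho\,\eta_\phi''(\rho)$. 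This is the step I expect to be the main obstacle: strict convexity of $h$ yields $\eta_\phi''>0$ but not by itself the third-order information, so the inequality must be extracted from the specific structure of the Robin function together with its blow-up $h(z)\to\infty$ as $z\to\pa\Om$, which forces $\eta_\phi'\to\infty$ and hence $\eta_\phi'/\rho\to\infty$ towards the boundary. For the disk the profile $\eta(\rho)=-\frac1{2\pi}\log(1-\rho^2)$ gives $\eta'/\rho=\frac1{\pi(1-\rho^2)}$, manifestly strictly increasing, so the inequality and thus strict monotonicity of $T$ hold; I expect the general convex case to follow by the same mechanism.

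Finally, the limit $d\to\infty$ comes for free: since $\Om$ is bounded the sublevel sets exhaust $\Om$, so $A(d)\uparrow|\Om|<\infty$, and as $A'\ge0$ is (strictly) decreasing while $A$ stays bounded, necessarily $A'(d)\to0$, i.e.\ $T(d)\to0$. Alternatively this can be read off directly from the boundary rate $h\sim-\frac1{2\pi}\log\dist(\cdot,\pa\Om)$, which makes both $|\Om|-A(d)$ and $A'(d)$ decay like $e^{-2\pi d}$.
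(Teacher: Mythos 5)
Your reduction of the monotonicity to the radial inequality $\rho\,\eta_\phi''(\rho)>\eta_\phi'(\rho)$ identifies exactly the crux, and it is in fact literally the same condition the paper uses: after the canonical change $z=z_0+\sqrt{2\rho}\,(\cos\theta,\sin\theta)$, the paper's claim that $\frac{\pa}{\pa\rho}h_1(\rho,\theta)$ is strictly increasing in $\rho$ is, with $r=\sqrt{2\rho}$, precisely the statement that $r\mapsto\eta_\theta'(r)/r$ is strictly increasing, i.e.\ your inequality. You are also right that this does \emph{not} follow from strict convexity of $h$ alone: for $\eta_\phi(r)=r^2/2$ one has equality, consistent with the isochronous harmonic oscillator, and more generally $\eta'(r)=\int_0^r\eta''$ can equal $r\eta''(r)$ without violating $\eta''>0$. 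Since you leave the inequality unproved except for the explicit disk computation, the strict monotonicity of $T$ --- the main content of the lemma --- is not established, and this is a genuine gap. Closing it requires using the actual structure of the Robin function of a convex domain (the Caffarelli--Friedman result concerns convexity/concavity of the harmonic radius $e^{-2\pi h}$, and extracting the third-order information $\rho\eta''>\eta'$ from it is exactly the nontrivial point that the paper's one-line deduction also glosses over). Your honest flagging of the obstacle is to your credit, but as written it is not a proof.

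The two limits are handled correctly and by a route different from (and arguably cleaner than) the paper's. The identity $T(d)=A'(d)/|\ka_1+\ka_2|$ via the coarea formula, the computation of $\lim_{d\to m(\Om)}A'(d)$ from the approximating ellipse, and the observation that a bounded increasing $A$ with decreasing derivative forces $A'(d)\to0$ all work, and they avoid the paper's appeal to the Hartman--Grobman theorem at a non-hyperbolic equilibrium, where topological equivalence would not control periods in any case. The only point to tighten is the passage from the asymptotic $A(d)=\frac{2\pi(d-m(\Om))}{\sqrt{\det h''(z_0)}}+o\bigl(d-m(\Om)\bigr)$ to the limit of $A'$; this is cleanest either by computing $A'(d)=\oint_{h^{-1}(d)}|\nabla h|^{-1}\,d\ell$ from the Morse normal form, or by using your own polar formula together with $\eta_\phi'(\rho)/\rho\to\langle h''(z_0)v(\phi),v(\phi)\rangle$ for $v(\phi)=(\cos\phi,\sin\phi)$ and the classical identity $\int_0^{2\pi}\langle h''(z_0)v(\phi),v(\phi)\rangle^{-1}\,d\phi=2\pi/\sqrt{\det h''(z_0)}$.
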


The lemma will be proved in Section~\ref{sec:pf-rem} below. As a consequence of this lemma we can apply Theorem~\ref{thm:main} in an arbitrary bounded convex domain. 

\begin{cor}\label{cor:convex}
  Let $\Om\subset\R^2$ be a bounded convex domain. Then for every $T<T(m(\Om))$ system \eqref{eq:main} has infinitely many periodic solutions $z^{(n)}$ with minimal period $T$ and having the properties stated in Theorem~\ref{thm:main}, where $\cC=h^{-1}(c)$ and $c>m(\Om)$ is uniquely determined by the equation $T(c)=T$. If $T\to T(m(\Om))$ then $c\to m(\Om)=\min h$, and $z^{(n)}$ converges towards the harmonic center of $\Om$.
\end{cor}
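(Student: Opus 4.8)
The plan is to obtain Corollary~\ref{cor:convex} as a direct application of Theorem~\ref{thm:main}, the only real work being to check that Assumption~\ref{ass} is satisfied on bounded convex domains and to convert the prescribed datum ``minimal period $T$'' into a level $c$ by means of Lemma~\ref{lem:T(c)}. First I would record the structural facts about the Robin function of a bounded convex domain collected just before Lemma~\ref{lem:T(c)}: the function $h$ is strictly convex, attains a unique non-degenerate minimum at the harmonic center $z_0$ with $h(z_0)=m(\Om)$, blows up at $\pa\Om$, and has connected level sets $h^{-1}(d)$ that are strictly star-shaped with respect to $z_0$ for every $d>m(\Om)$.

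To verify Assumption~\ref{ass} for a given $c>m(\Om)$, I would fix a compact interval $[c_0,d_0]\subset(m(\Om),\infty)$ containing $c$ in its interior and take $\cU(\cC_c)$ to be a neighbourhood of $\bigcup_{c_0\le d\le d_0}h^{-1}(d)$. On this interval $\nabla h\ne0$, so each level set $\cC_d=h^{-1}(d)$ is a single closed orbit of \eqref{eq:cent-vort}; since the level sets are connected, $\cC_d$ coincides with $h^{-1}(d)$ and is strictly star-shaped with respect to the fixed point $z_0$. By Lemma~\ref{lem:T(c)} the period map $d\mapsto T(d)$ is strictly decreasing, hence strictly monotone. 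Thus Assumption~\ref{ass} holds, which is exactly what is needed to invoke Theorem~\ref{thm:main}.

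The next step is the choice of $c$. By Lemma~\ref{lem:T(c)} the map $T$ is strictly decreasing on $(m(\Om),\infty)$ with $\lim_{d\to m(\Om)^+}T(d)=T(m(\Om))$ and $\lim_{d\to\infty}T(d)=0$; together with the continuity of $T$ (which follows from the smooth dependence of the regular orbits $\cC_d$ on $d$, as $\nabla h$ is non-vanishing there) this shows that $T$ is a homeomorphism of $(m(\Om),\infty)$ onto $(0,T(m(\Om)))$. Hence for every prescribed $T\in(0,T(m(\Om)))$ there is a unique $c=c(T)>m(\Om)$ with $T(c)=T$, and applying Theorem~\ref{thm:main} to this $c$ produces the sequence $z^{(n)}$ of periodic solutions of minimal period $T$ enjoying properties (a)--(d). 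This proves the first assertion.

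For the concluding statement I would use that the inverse $c=c(T)$ is continuous, so $c(T)\to m(\Om)^+$ as $T\to T(m(\Om))^-$. Since $h$ is strictly convex with a non-degenerate minimum at $z_0$, the sublevel sets $\{h\le d\}$ shrink to $\{z_0\}$ as $d\to m(\Om)^+$, whence $\sup_{z\in\cC_c}|z-z_0|\to0$. By parts (a) and (b) of Theorem~\ref{thm:main}, for each such $c$ the center of vorticity $C^{(n)}$ converges to an orbit on $\cC_c$ and $\|z^{(n)}_1-z^{(n)}_2\|_\infty\to0$, so both vortices lie uniformly close to $\cC_c$, and hence close to $z_0$, for $n$ large. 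I expect the only genuinely delicate point here to be the bookkeeping of the two limits $n\to\infty$ and $c\to m(\Om)$: the cleanest formulation is a diagonal statement asserting that for every neighbourhood of $z_0$ one can first fix $T$ close enough to $T(m(\Om))$ and then $n$ large so that the entire solution $z^{(n)}(t)$ stays in that neighbourhood for all $t$. Everything else is immediate from the cited results.
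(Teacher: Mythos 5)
Your proposal is correct and follows exactly the route the paper intends: the paper gives no separate proof of Corollary~\ref{cor:convex}, treating it as an immediate consequence of Lemma~\ref{lem:T(c)} (strict monotonicity of $T$ and its limiting values) together with the strict star-shapedness of the level sets of the convex Robin function, which is precisely how you verify Assumption~\ref{ass} and select $c$ via $T(c)=T$. Your additional care about continuity of $T$ and the double limit in the final convergence statement is a reasonable fleshing-out of details the paper leaves implicit.
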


Now we get back to a general domain $\Om$. Here we obtain solutions near a non-degenerate local minimum.

\begin{cor}\label{cor:min}
  Let $z_0$ be a non-degenerate local minimum of $h$ and set $m:=h(z_0)$, $T(m):=\frac{2\pi}{|\ka_1+\ka_2|\sqrt{\det h''(z_0)}}$. There exists $\eps>0$ and a neighbourhood $\cU(z_0)$ of $z_0$ such that for $c\in(m,m+\eps)$ system \eqref{eq:main} has infinitely many periodic solutions $z^{(n)}$ with trajectories in $\cU(z_0)$ and minimal period $T(c)<T(m)$. The solutions have the properties stated in Theorem~\ref{thm:main} with $\cC_c=h^{-1}(c)\cap\cU(z_0)$.
\end{cor}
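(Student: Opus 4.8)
The plan is to deduce the corollary from Theorem~\ref{thm:main} by verifying Assumption~\ref{ass} on a small one-sided neighbourhood of the level $m=h(z_0)$. Since $z_0$ is a non-degenerate local minimum, $h''(z_0)$ is positive definite, and by continuity $h''$ stays positive definite on some ball $\cU(z_0)$; in particular $z_0$ is the only critical point of $h$ in $\cU(z_0)$, and for $c>m$ close to $m$ the set $\cC_c:=h^{-1}(c)\cap\cU(z_0)$ is a single closed $C^1$ curve encircling $z_0$, hence a non-constant periodic trajectory of \eqref{eq:cent-vort}. It then suffices to check that these $\cC_c$ are strictly star-shaped and that $T$ is strictly monotone on a suitable interval $[c_0,d_0]\subset(m,m+\eps)$; Theorem~\ref{thm:main} (together with Remark~\ref{rem:betato0}c) applied at each such level then yields the asserted families $z^{(n)}$.

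For the star-shapedness I would take $z_0$ as the star centre. A Taylor expansion gives, for $z\in\cU(z_0)$ after shrinking $\cU(z_0)$,
\[
  \langle\nabla h(z),\,z-z_0\rangle=\langle h''(z_0)(z-z_0),\,z-z_0\rangle+o(|z-z_0|^2)>0\qquad(z\ne z_0).
\]
This has two consequences: $h$ is strictly increasing along every ray emanating from $z_0$, so each ray meets a given level set $\cC_c$ in at most one point; and the ray direction is never tangent to $\cC_c$, giving transversality. Since $\cC_c$ is a closed curve surrounding $z_0$, each ray meets it at least once, hence in exactly one transversal point, which is precisely strict star-shapedness with respect to $z_0$.

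The substantive point is the strict monotonicity of $d\mapsto T(d)$. Here I would use that \eqref{eq:cent-vort} is a one degree of freedom Hamiltonian system, so that, writing $A(d)$ for the area enclosed by $\cC_d$, the coarea formula gives the period as $T(d)=\frac{1}{|\ka_1+\ka_2|}A'(d)$. Linearising \eqref{eq:cent-vort} at $z_0$ shows $A(d)=\frac{2\pi(d-m)}{\sqrt{\det h''(z_0)}}+o(d-m)$, whence $T(d)\to T(m)=\frac{2\pi}{|\ka_1+\ka_2|\sqrt{\det h''(z_0)}}$ as $d\to m^+$, which also identifies the limiting value $T(m)$. Strict monotonicity of $T$ near $m$ then amounts to a definite sign of $A''$, equivalently to the non-vanishing of the first correction in the expansion of the period function about the minimum. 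I expect this to be the main obstacle: unlike the globally convex situation of Lemma~\ref{lem:T(c)}, mere non-degeneracy of the minimum does not by itself force monotonicity (a minimum that is isochronous to high order would make $T$ locally constant), so one must extract the leading non-trivial term of $T(d)-T(m)$ from the higher order Taylor coefficients of $h$ at $z_0$. Its sign then yields strict monotonicity on a short one-sided interval after shrinking $\eps$, and the stated inequality $T(c)<T(m)$ corresponds to the case where this correction makes $T$ decreasing.

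Granting this, Assumption~\ref{ass} holds on every sufficiently short interval $[c_0,d_0]\subset(m,m+\eps)$, and Theorem~\ref{thm:main} applied at the level $c$ produces, for each such $c$, a sequence $z^{(n)}$ with minimal period $T(c)$ and trajectories in $\cU(z_0)$ satisfying properties a)--d), with $\cC_c=h^{-1}(c)\cap\cU(z_0)$. Finally, as $c\to m^+$ the curve $\cC_c$ shrinks to $z_0$, so by parts a) and b) of Theorem~\ref{thm:main} the solutions $z^{(n)}$ converge to $z_0$, matching the behaviour recorded in Corollary~\ref{cor:convex}.
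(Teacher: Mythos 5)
Your overall route is the same as the paper's: verify Assumption~\ref{ass} for the levels just above $m$ and then apply Theorem~\ref{thm:main}. Your star-shapedness argument via $\langle\nabla h(z),z-z_0\rangle=\langle h''(z_0)(z-z_0),z-z_0\rangle+o(|z-z_0|^2)>0$ is fine and is essentially what the paper does (it phrases this as: $h''(z_0)$ positive definite makes $h$ strictly convex near $z_0$, so the level lines are convex closed curves around $z_0$, hence strictly star-shaped). Your identification of the limit value $T(m)$ through $T(d)=\tfrac{1}{|\ka_1+\ka_2|}A'(d)$ and the area of the limiting ellipse is also correct, and arguably cleaner than the paper's appeal to linearization.

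The gap is exactly where you flag it: you never establish that $d\mapsto T(d)$ is strictly monotone, and without that Assumption~\ref{ass} --- and hence the twist condition of Proposition~\ref{prop:rot}~b) --- is unavailable, so Theorem~\ref{thm:main} cannot be invoked. The paper closes this step not by extracting higher Taylor coefficients but by repeating the mechanism of Lemma~\ref{lem:T(c)}: in the canonical coordinates $(\rho,\theta)\mapsto z_0+\sqrt{2\rho}\,(\cos\theta,\sin\theta)$ the angular velocity is $\dot\theta=\partial_\rho h_1(\rho,\theta)$ with $h_1(\rho,\theta)=(\ka_1+\ka_2)\,h\bigl(z_0+\sqrt{2\rho}\,(\cos\theta,\sin\theta)\bigr)$, and local strict convexity of $h$ is invoked to conclude that $\partial_\rho h_1$ is strictly increasing in $\rho$, whence $T(c)$ is strictly decreasing for $c>m$ near $m$. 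Note, however, that your worry is substantive and is not fully dissolved by this: for the purely quadratic model $h(z)=\tfrac12\langle h''(z_0)(z-z_0),z-z_0\rangle$ the quantity $\partial_\rho h_1$ is independent of $\rho$ and the centre is isochronous, so non-degeneracy of the minimum only yields the non-strict inequality at leading order; a short computation shows $\partial_\rho^2 h_1$ is governed by third-order radial derivatives of $h$, about which $h''(z_0)>0$ says nothing. So to complete your argument you must either adopt the convexity step of Lemma~\ref{lem:T(c)} as the paper does, or actually carry out the expansion of the period function you sketch --- in which case, as you yourself observe, the sign of the correction (hence the direction of $T(c)<T(m)$) is not determined by the stated hypotheses. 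In short: your proposal reproduces the paper's proof minus its one substantive analytic step, and your diagnosis of where the difficulty sits is accurate.
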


\begin{rem} \rm
  a) Since the Robin function satisfies $h(z)\to\infty$ as $z\to\pa\Om$ in a bounded domain there always exists a minimum. It is not difficult to produce examples of domains so that the associated Robin function has many local minima. Moreover, for a generic domain all critical points are non-degenerate; see \cite{Micheletti-Pistoia:2014}. Therefore Corollary~\ref{cor:min} applies to generic domains.

  b) Corollary~\ref{cor:min} in particular yields solutions $z^{(n)}(t)$ approaching the local minimum $z_0$ of $h$, i.e.\ $z^{(n)}_j(t)\to z_0$ as $n\to\infty$, $k=1,2$. The minimal periods of these solutions converge towards $T_m = \frac{2\pi}{|\ka_1+\ka_2|\sqrt{\det h''(z_0)}}$. In \cite{Bartsch:2016,Bartsch-Dai:2016,Bartsch-Gebhard:2016} the authors also obtained periodic solutions converging towards $z_0$. More precisely, they produced a family of $T_r$-periodic solutions $z^{(r)}(t)$, parameterized over $r\in(0,r_0)$ with $|z^{(r)}_j(t)-z_0| = r+o(r)$ and $T_r\to0$ as $r\to0$. Therefore these solutions are different from those obtained in the present paper. Also the method of proof is very different. In \cite{Bartsch:2016,Bartsch-Dai:2016,Bartsch-Gebhard:2016} variational methods or degree methods were used whereas we apply a multidimensional version of the Poincar\'e-Birkhoff theorem. Consequently, here we do not obtain continua of periodic solutions. Instead we obtain infinitely many periodic solutions with prescribed period.
\end{rem}

In our last corollary we consider the case when $\pa\Om$ has a component that is strictly star-shaped.

\begin{cor}\label{cor:boundary}
  Suppose $\pa\Om$ has a compact component $\Ga$ that is of class $C^2$ and is strictly star-shaped. Then there exist $M>0$ and a neighbourhood $\cU(\Ga)$ of $\Ga$ such that for $c>M$ system \eqref{eq:main} has infinitely many periodic solutions $z^{(n)}$ with trajectories in $\cU(\Ga)$ and minimal period $T(c)$. The solutions have the properties stated in Theorem~\ref{thm:main} with $\cC_c=h^{-1}(c)\cap\cU(\Ga)$. Moreover $T(c)\to0$ as $c\to\infty$.
\end{cor}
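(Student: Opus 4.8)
The plan is to verify Assumption~\ref{ass} for the level curves $\cC_c=h^{-1}(c)\cap\cU(\Ga)$ with $c$ large and then to invoke Theorem~\ref{thm:main}. Everything rests on the behaviour of the Robin function $h$ near the smooth compact boundary component $\Ga$. I would start from the classical fact (used in the near-boundary analysis of \cite{Bartsch-Dai-Gebhard:2016}) that the Robin function $h$, the diagonal of the regular part of the hydrodynamic Green function, blows up logarithmically at $\pa\Om$: writing $d(z):=\dist(z,\Ga)$,
\[
  h(z) = -\frac{1}{2\pi}\log d(z) + O(1)\qquad\text{as }d(z)\to0,
\]
with the remainder controlled together with its first and second derivatives. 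In a collar $\cU(\Ga)$ I would introduce boundary-normal (Fermi) coordinates $(s,t)$, with $s$ arc length along $\Ga$ (set $L:=|\Ga|$) and $t=d(z)\ge0$, so that $h(s,t)=-\frac{1}{2\pi}\log t+h_0(s)+o(1)$ with $h_0\in C^2(\Ga)$, and $\nabla h=-\frac{1}{2\pi t}\,\nu+O(1)$, where $\nu$ is the inward unit normal.

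\emph{Star-shapedness.} From this expansion, for $c$ large the set $\cC_c$ is a single closed curve, a graph $t=t_c(s)=e^{-2\pi c}e^{2\pi h_0(s)}(1+o(1))$ over $\Ga$ with $t_c\to0$ uniformly; since $h_0\in C^1$ one also gets $t_c'\to0$, so the unit tangent and unit normal of $\cC_c$ converge uniformly to those of $\Ga$, i.e.\ $\cC_c\to\Ga$ in $C^1$. If $z_0$ denotes the star-center of $\Ga$, strict star-shapedness means $\langle z-z_0,\nu\rangle$ is bounded away from $0$ along $\Ga$; by the $C^1$-convergence the same holds along $\cC_c$ for $c>M$, so each ray from $z_0$ meets $\cC_c$ exactly once and transversally, and $\cC_c$ is strictly star-shaped with respect to $z_0$.

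\emph{Period and monotonicity.} With $\ka:=\ka_1+\ka_2$, the orbit of \eqref{eq:cent-vort} on $\cC_c$ has minimal period
\[
  T(c)=\frac{1}{|\ka|}\oint_{\cC_c}\frac{d\ell}{|\nabla h|}
      =-\frac{1}{|\ka|}\,\frac{d}{dc}\,\mathrm{Area}\{z\in\cU(\Ga):h(z)\ge c\},
\]
the second equality being the co-area formula. Using $|\nabla h|\sim\frac{1}{2\pi t}$ on $\cC_c$ and $|\cC_c|\to L$, the leading term is $T(c)=\frac{2\pi L}{|\ka|}\,e^{-2\pi c}(1+o(1))$, which already yields $T(c)\to0$ as $c\to\infty$. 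Differentiating once more gives $T'(c)=-\frac{4\pi^2L}{|\ka|}\,e^{-2\pi c}(1+o(1))<0$ for $c>M$, so $T$ is strictly decreasing. Assumption~\ref{ass} then holds on any small interval $[c_0,d_0]\subset(M,\infty)$, and Theorem~\ref{thm:main} applies for every $c>M$, producing the periodic solutions $z^{(n)}$ with the stated shape; the limit $T(c)\to0$ has been obtained along the way.

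The hard part will be the strict monotonicity of $T$. The leading behaviour is robust, since $\frac{dT}{d\tau}\sim\frac{2\pi L}{|\ka|}>0$ with $\tau=e^{-2\pi c}$ decreasing in $c$; but to exclude $T'(c)=0$ one must control the remainders after differentiation, i.e.\ one needs the expansion of $h$ near $\Ga$ (hence of $|\nabla h|$ and of the arc-length element along $\cC_c$) to hold with differentiable error of order $o(\tau)$. Establishing these $C^1$--$C^2$ boundary estimates for the regular part of the hydrodynamic Green function is the real work; the star-shapedness, by contrast, follows essentially for free from the $C^1$-convergence $\cC_c\to\Ga$.
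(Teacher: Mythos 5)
Your proof follows the same route as the paper's: both rest on the Bandle--Flucher boundary asymptotics of the Robin function (you use $h(z)=-\frac{1}{2\pi}\log d(z)+O(1)$, the paper the equivalent $\nabla h(z)=\nu(p(z))/(2\pi\,d(z,\Gamma))+O(1)$) to verify strict star-shapedness of the level lines and the monotonicity and decay of $T(c)$, and then invoke Theorem~\ref{thm:main}. Your write-up is in fact more detailed than the paper's one-paragraph argument---in particular you rightly flag that strict monotonicity of $T$ needs differentiated control of the remainders, a point the paper passes over silently---and the only slip is cosmetic: since $t_c$ is not constant along $\cC_c$, the leading constant in $T(c)$ should be $\frac{2\pi}{|\kappa_1+\kappa_2|}\oint_\Gamma e^{2\pi h_0(s)}\,ds$ rather than $\frac{2\pi L}{|\kappa_1+\kappa_2|}$, which does not affect either conclusion.
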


\begin{rem}\rm
  a) Corollary~\ref{cor:boundary} applies to the typical multiply connected circular domains $\Om=\Om_0\setminus\bigcup_{i=1}^m\Om_i$ where all $\Om_i$ are strictly starshaped, $\Om_1,\dots,\Om_m\subset\Om_0$ are compactly contained in $\Om_0$, and $\overline\Om_1,\dots,\overline\Om_m$ are disjoint. One can take $\Ga=\pa\Om_i$, for every $i=1,\dots,m$. If $\Om_0$ is bounded one can also take $\Ga=\pa\Om_0$.

  b) In \cite{Bartsch-Dai-Gebhard:2016} the authors also obtain periodic solutions near a compact component $\Ga$ of the boundary. It is not required that $\Om$ is star-shaped, and the authors could deal with $N\ge2$ vortices. On the other hand, in \cite{Bartsch-Dai-Gebhard:2016} the vorticities had to be identical. For $r>0$ small they obtain $T_r$-periodic solutions where the vortices $z_1,\dots,z_N$ all follow the same trajectory $\Ga_r=\{z_1(t):t\in\R\}$ with a time shift $z_j(t)=z_1(t+\frac{(j-1)T_r}{N})$. At first order (in $r$) the trajectory $\Ga_r$ consists of the points $z\in\Om$ with distance $r$ from $\Ga$. These solutions are very different from those obtained in Corollary~\ref{cor:boundary}, however. In particular, for $j\ne k$ the distance $|z_j(t)-z_k(t)|$ is of order $\frac{L}{N}+o(1)$ as $r\to0$ where $L$ is the length of $\Ga$.
\end{rem}

%
%

\section{Proof of Theorem~\ref{thm:main}} \label{sec:pf-main}

For the proof of Theorem~\ref{thm:main} we may assume that the trajectories $\cC_d$, $c_0\le d\le d_0$, are strictly star-shaped with respect to $z_0=0$.
We may also assume that $\ka_1+\ka_2=1$. If $\ka_1+\ka_2\ne1$ then apply Theorem~\ref{thm:main} to the system with $\tilde{\ka}_j=\frac{\ka_j}{\ka_1+\ka_2}$ instead of $\ka_j$, $j=1,2$. A solution $\tilde{z}(t)$ of this system yields a solution $z(t)=\tilde{z}((\ka_1+\ka_2)t)$ of the original system; recall that we assume $\ka_1+\ka_2\ne0$. Finally we set $\si=\mathrm{sgn}(\ka_1\ka_2)$.

Let $E_2$ be the $2\times2$ identity matrix, and set $E_2^\si:=\begin{pmatrix} \si & 0 \\ 0 & 1\end{pmatrix}\in\R^{2\times2}$. The transformation $w=Az$ with
\[
  A = \begin{pmatrix}
         \sqrt{|\ka_1\ka_2|}E_2^\si & -\sqrt{|\ka_1\ka_2|}E_2^\si \\
         \ka_1E_2 & \ka_2E_2
      \end{pmatrix}\in\R^{4\times4}
\]
transforms the system \eqref{eq:main} to a Hamiltonian system
\begin{equation}\label{eq:w}
  \dot{w}_j = J\nabla_{w_j} H_1(w_1,w_2) \quad\text{for } j =1,2,
\end{equation}
with Hamiltonian
\[
  H_1(w_1,w_2) = -\frac{\ka_1\ka_2}{\pi} \log|w_1|
        -  2 \ka_1\ka_2 g\left( A^{-1} w \right)
      - \ka_1^2 h\left( \Pi_1 ( A^{-1} w ) \right)
      - \ka_2^2 h\left( \Pi_2 ( A^{-1} w ) \right).
\]
where $\Pi_j: \R^4 \to \R^2$, $\Pi_j(z_1,z_2)=z_j$, for $j=1,2$. The transformation $A$ is defined on $A\cF_2\Om := A(\cF_2\Om)$. Note that $w_2=\ka_1 z_1+\ka_2 z_2 \in \Om$ provided $|z_1-z_2| < \frac1{|\ka_1|}\dist(z_2,\pa\Om)$, because $\ka_1+\ka_2=1$. Moreover, given a compact subset $K\subset\Om$ there exists $\de>0$ so that $\big(B_\de(0)\setminus\{0\}\big)\times K \subset A\cF_2\Om$. Here $B_\de(0)$ denotes the closed disk around $0$ with radius $\de$.

Given $0<a_1<b_1$ we define the annulus
\[
  \cA_1(a_1,b_1) := \{w_1\in\R^2:a_1 \le |w_1| \le b_1\},
\]
and for $c_0\le c_1<c<d_1\le d_0$ we define the annular region
\[
  \cA_2(c_1,d_1) := \left\{w_2\in\cU: c_1 \le h(w_2) \le d_1\right\}.
\]
From now on we fix some $c_1\in(c_0,c)$ and some $d_1\in(c,d_0)$ arbitrarily. Suitable values  $b_1>a_1>0$ will be carefully chosen later.

\begin{lem} \label{lem:grad-w2}
The gradient of $H_1$ with respect to $w_2$ satisfies
\[
  \nabla_{w_2}H_1(w) = - \nabla h \left(w_2\right) + Q(w),
\]
with $Q(w)=o(1)$ as $w_1\to 0$ uniformly for $w_2$ in compact subsets of $\Om$.
\end{lem}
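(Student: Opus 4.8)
The plan is to compute $\nabla_{w_2}H_1$ term by term and show that every contribution except one is negligible as $w_1\to0$. Recall that $w_2=\ka_1 z_1+\ka_2 z_2$ is the center of vorticity (with $\ka_1+\ka_2=1$), while $w_1=\sqrt{|\ka_1\ka_2|}\,E_2^\si(z_1-z_2)$ encodes the (rescaled) difference. The key geometric point is that $w_1\to0$ forces $z_1-z_2\to0$, so that $z_1,z_2\to w_2$ while $w_2$ itself stays fixed in a compact subset of $\Om$. Writing $z=A^{-1}w$, I would first record the relations $z_1=w_2+\frac{\ka_2}{\sqrt{|\ka_1\ka_2|}}(E_2^\si)^{-1}w_1$ and $z_2=w_2-\frac{\ka_1}{\sqrt{|\ka_1\ka_2|}}(E_2^\si)^{-1}w_1$, so that both $z_1$ and $z_2$ differ from $w_2$ by a quantity of order $|w_1|$, and $\Pi_j(A^{-1}w)\to w_2$ as $w_1\to0$.

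Next I would differentiate. The logarithmic term $-\frac{\ka_1\ka_2}{\pi}\log|w_1|$ does not depend on $w_2$ and so contributes nothing to $\nabla_{w_2}H_1$. For the remaining three terms I apply the chain rule, using the constant Jacobians $\partial z_1/\partial w_2=\partial z_2/\partial w_2=E_2$ that follow from the displayed expressions for $z_1,z_2$ above (each $z_j$ is $w_2$ plus a function of $w_1$ alone). This gives
\[
  \nabla_{w_2}H_1(w)
  = -2\ka_1\ka_2\bigl(\nabla_{z_1}g+\nabla_{z_2}g\bigr)(z_1,z_2)
    -\ka_1^2\nabla h(z_1)-\ka_2^2\nabla h(z_2),
\]
where the gradients of $g$ are evaluated at $(z_1,z_2)=A^{-1}w$. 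The strategy is now simply to pass to the limit $w_1\to0$, i.e.\ $z_1,z_2\to w_2$, using the $\cC^2$ regularity of $g$ (hence $\cC^1$ regularity of its gradient) and of $h$.

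In the limit, continuity of $\nabla g$ and $\nabla h$ on $\Om\times\Om$ and $\Om$ gives $\nabla_{z_1}g(z_1,z_2),\nabla_{z_2}g(z_1,z_2)\to\nabla_{z_1}g(w_2,w_2)$ and $\nabla h(z_j)\to\nabla h(w_2)$. Here I use the symmetry $g(u,v)=g(v,u)$, which yields $\nabla_{z_1}g(w_2,w_2)=\nabla_{z_2}g(w_2,w_2)=\frac12\nabla h(w_2)$ since $h(z)=g(z,z)$ gives $\nabla h=\nabla_{z_1}g(z,z)+\nabla_{z_2}g(z,z)$. Collecting the limiting values and using $\ka_1+\ka_2=1$, the leading term becomes
\[
  -2\ka_1\ka_2\cdot\nabla h(w_2)-(\ka_1^2+\ka_2^2)\nabla h(w_2)
  = -(\ka_1+\ka_2)^2\nabla h(w_2)=-\nabla h(w_2),
\]
which is exactly the claimed main term; everything else is absorbed into $Q(w)=o(1)$. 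The uniformity over $w_2$ in a compact subset $K\subset\Om$ follows because $\nabla g$ and $\nabla h$ are uniformly continuous on compact sets and, by the remark preceding the lemma, $(B_\de(0)\setminus\{0\})\times K\subset A\cF_2\Om$ for some $\de>0$, so the displacement of $z_1,z_2$ from $w_2$ is controlled uniformly by $|w_1|$. The only mildly delicate point — the main obstacle, such as it is — is verifying that the $w_2$-Jacobians of $z_1$ and $z_2$ are genuinely the identity (so that no spurious $w_1$-derivatives leak in through the chain rule), and handling the symmetric $g$-term correctly; once the explicit inverse $A^{-1}$ is written out this is a routine but careful bookkeeping check.
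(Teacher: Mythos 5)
Your proposal is correct and follows essentially the same route as the paper: drop the $\log|w_1|$ term, apply the chain rule with $\partial z_j/\partial w_2 = E_2$, and pass to the limit $z_1,z_2\to w_2$ using the $\cC^1$ regularity of $\nabla g$ and $\nabla h$ together with $h(z)=g(z,z)$ and $\ka_1+\ka_2=1$ to collect the coefficient $-(\ka_1+\ka_2)^2=-1$. Your treatment of the symmetric $g$-term and of the uniformity over compact $K\subset\Om$ matches the paper's argument, just spelled out in slightly more detail.
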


\begin{proof}
Recall that $\ka_1+\ka_2=1$. A direct computation shows
\[
\begin{aligned}
  \nabla_{w_2}H_1(w)
   &= -2\ka_1\ka_2 \nabla_{z_1}g(A^{-1}w) - 2 \ka_1\ka_2 \nabla_{z_2}g(A^{-1}w) - \ka_1^2\nabla h(\Pi_1 ( A^{-1} w ) )\\
   &\hspace{1cm}
      - \ka_2^2 \nabla h(\Pi_2 ( A^{-1} w ) )\big).
\end{aligned}
\]
The Taylor expansion for $h$ near $w_2$ yields
\[
  \nabla h(\Pi_1 ( A^{-1} w ) ) = \nabla h \left( w_2\right)
+ o(1)
   \quad\text{as } w_1\to0,
\]
and
\[
  \nabla h(\Pi_2 ( A^{-1} w ) ) = \nabla h \left( w_2 \right)
+ o(1)
   \quad\text{as } w_1\to0.
\]
This implies
\[
  \ka_1^2 \nabla h(\Pi_1 ( A^{-1} w ) ) + \ka_2^2 \nabla h(\Pi_2 ( A^{-1} w ) ) = (\ka_1^2+\ka_2^2) \nabla h\left( w_2 \right) + o(1) \quad\text{as } w_1\to0.
\]
Using the symmetry of $g(z_1,z_2)$ and $h(z)=g(z,z)$ we obtain analogously
\[
  \nabla_{z_1}g(A^{-1} w)+ \nabla_{z_2}g(A^{-1}w) = \nabla h \left( w_2 \right) + o(1) \quad\text{as } w_1\to0.
\]
This yields $Q(w)=o(1)$ as $w_1\to 0$. Since all functions are of class $\cC^2$ the convergence is uniform for $w_2$ in a compact subset of $\Om$.
\end{proof}

Now let $W(t;w)\in A\cF_2\Om$ be the solution of the initial value problem for \eqref{eq:w} with initial condition $W(0;w)=w$. We write $J_w$ for its maximal existence interval.

\begin{lem} \label{lem:w1uniformly}
  a) For all $\eps>0$ there exists $0<\de<\eps$ such that $\big(B_\de(0)\setminus \{0\}\big)\times\cA_2(c_1,d_1) \subset A\cF_2\Om$. Moreover, if $0<|w_1|\le\de$ and $w_2\in\cA_2(c_1,d_1)$ then
  \[
    |W_1(t;w)| < \eps \mbox{ for every $t \in J_w$ with $W_2(t;w) \in \cA_2(c_0,d_0)$.}
  \]

  b) If $\sup J_w<\infty$ for some $w\in \big(B_\de(0)\setminus\{0\}\big)\times\cA_2(c_1,d_1) \subset A\cF_2\Om$ then there exists $T(w)<\sup J_w$ such that $W(t;w) \notin \big(B_\de(0)\setminus\{0\}\big)\times\cA_2(c_1,d_1)$ for $T(w)<t<\sup J_w$.
\end{lem}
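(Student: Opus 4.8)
The engine of both parts is conservation of the Hamiltonian $H_1$ along solutions of \eqref{eq:w}. I would write
\[
  H_1(w) = -\frac{\ka_1\ka_2}{\pi}\log|w_1| + G(w), \qquad
  G(w):=-2\ka_1\ka_2\, g(A^{-1}w) - \ka_1^2 h(\Pi_1(A^{-1}w)) - \ka_2^2 h(\Pi_2(A^{-1}w)),
\]
and note that $G$ is continuous on $A\cF_2\Om$ and, since $g,h$ are $\cC^2$, extends continuously across the singular set $\{w_1=0\}$ (there $A^{-1}(0,w_2)=(w_2,w_2)$, so $G(0,w_2)=-h(w_2)$). First I would fix $\eps>0$ so small that, by the inclusion stated before the lemma applied to $K=\cA_2(c_0,d_0)$, one has $(B_\eps(0)\setminus\{0\})\times\cA_2(c_0,d_0)\subset A\cF_2\Om$; then $G$ is bounded, say $|G|\le G_0$, on the compact set $\overline{B_\eps(0)}\times\cA_2(c_0,d_0)$. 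Since $\frac{d}{dt}H_1(W(t;w))=0$, the identity $H_1(W(t;w))=H_1(w)$ rearranges, whenever $W(t;w)\in\overline{B_\eps(0)}\times\cA_2(c_0,d_0)$, into
\[
  \big|\log|W_1(t;w)|-\log|w_1|\big| = \frac{\pi}{|\ka_1\ka_2|}\,\big|G(W(t;w))-G(w)\big| \le C, \qquad C:=\frac{2\pi G_0}{|\ka_1\ka_2|},
\]
whence $|w_1|e^{-C}\le|W_1(t;w)|\le|w_1|e^{C}$. Thus energy conservation freezes $|W_1|$ up to a fixed multiplicative factor as long as the slow variable $w_2$ stays in the compact annular region; this two-sided bound is the crux of the whole lemma.

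For part a), the inclusion $(B_\de(0)\setminus\{0\})\times\cA_2(c_1,d_1)\subset A\cF_2\Om$ is the quoted fact with $K=\cA_2(c_1,d_1)$. I would then choose $\de\in(0,\eps)$ so small that $\de\,e^{C}<\eps$. Starting from $0<|w_1|\le\de$ and $w_2\in\cA_2(c_1,d_1)\subset\opint\cA_2(c_0,d_0)$, let $t_1>0$ be the first exit time of $W(t;w)$ from $\overline{B_\eps(0)}\times\cA_2(c_0,d_0)$ (the case $t<0$ is symmetric). On $[0,t_1)$ the upper estimate gives $|W_1(t;w)|\le\de\,e^{C}<\eps$ strictly, so $W_1$ never reaches the level $|W_1|=\eps$, and by continuity this remains true at $t_1$. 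Hence the trajectory can only leave through $W_2(t_1;w)\in\pa\cA_2(c_0,d_0)$, and therefore $|W_1(t;w)|<\eps$ at every time of this excursion at which $W_2(t;w)\in\cA_2(c_0,d_0)$, which is the assertion.

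For part b), set $\rho_0:=|w_1|e^{-C}>0$ and consider the compact set $R':=\{\rho_0\le|w_1|\le\de\}\times\cA_2(c_1,d_1)$, which lies in $A\cF_2\Om$ for $\de$ small. By the lower estimate, whenever $W(t;w)$ lies in $R:=(B_\de(0)\setminus\{0\})\times\cA_2(c_1,d_1)$ one has $|W_1(t;w)|\ge\rho_0$, i.e.\ $W(t;w)\in R'$; equivalently, $W(t;w)\notin R'$ forces $W(t;w)\notin R$. Now suppose $\sup J_w<\infty$. Since \eqref{eq:w} has a locally Lipschitz right-hand side on the open set $A\cF_2\Om$, the escape lemma for maximal solutions yields $T(w)<\sup J_w$ with $W(t;w)\notin R'$ for all $t\in(T(w),\sup J_w)$: otherwise a sequence $t_n\uparrow\sup J_w$ with $W(t_n;w)\to p\in R'\subset A\cF_2\Om$ would allow continuation past $\sup J_w$ via the uniform local existence time near $p$, contradicting maximality. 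By the contrapositive just noted, $W(t;w)\notin R$ for $T(w)<t<\sup J_w$, which is the claim.

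The only genuine difficulty is the first paragraph: recognizing that $H_1$ splits into the singular logarithm plus a part that is bounded on the relevant compact annuli, so that its conservation pins $|W_1|$ to $|w_1|$ from \emph{both} sides. Once this two-sided control is available, part a) is a routine continuity (bootstrap) argument and part b) is the textbook fact that a solution with finite maximal time must eventually leave every compact subset of the phase space.
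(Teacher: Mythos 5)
Your proof is correct and runs on the same engine as the paper's: conservation of $H_1$ along solutions of \eqref{eq:w}, combined with the fact that the non-logarithmic part of $H_1$ extends continuously across $\{w_1=0\}$ and is therefore bounded on the compact set $B_\eps(0)\times\cA_2(c_0,d_0)$. The difference is in execution, and it is a real one. The paper proves a) by contradiction with sequences $|w_{1,n}|\to0$ (bounded left-hand side of the energy identity versus a right-hand side tending to $\si\infty$) and disposes of b) with the single sentence that it ``follows from a similar energy argument.'' You instead extract the explicit two-sided estimate $|w_1|e^{-C}\le|W_1(t;w)|\le|w_1|e^{C}$, valid as long as the trajectory stays in the compact region. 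The upper bound yields a) with a quantitative choice $\de e^{C}<\eps$ rather than an unquantified $\de$ from compactness, and the lower bound is exactly the ingredient the paper leaves implicit in b): it confines the trajectory to the compact set $R'\subset A\cF_2\Om$ whenever it lies in $R$, so the standard escape lemma for maximal solutions applies. One caveat, which your argument shares with (and in fact handles more honestly than) the paper's: in part a) the energy estimate controls $|W_1|$ only up to the first exit time of $W_2$ from $\cA_2(c_0,d_0)$. If the trajectory left that annulus in the $w_2$-component and later re-entered, $G$ would be uncontrolled during the excursion and the bound on $|W_1|$ at re-entry would not follow from what is written; the paper's version has the same issue, since boundedness of the left-hand side of its energy identity is only clear while the orbit stays in the compact region. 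This is harmless for the way the lemma is actually used (Lemma~\ref{lem:existenceT} keeps $W_2$ inside $\cA_2(c_0,d_0)$ throughout $[0,T(c)]$), but it is worth noting that both proofs establish the statement only along the first excursion.
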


\begin{proof}
a) By contradiction, suppose that for some $\eps>0$ there exist sequences $w_n=(w_{1,n},w_{2,n})$, $t_n \in J_{w_n}$, with $|w_{1,n}| \to 0$ as $n\to\infty$, $w_{2,n} \in \cA_2(c_1,d_1)$ and
\begin{equation} \label{eq:W1geqeps}
  |W_1(t_n, w_n)| \geq \varepsilon,\quad W_2(t,w_n) \in \cA_2(c_0,d_0).
\end{equation}
Then
$$
  H_1(W_1(t_n,w_n), W_2(t_n,w_n)) = H_1(w_{1,n},w_{2,n}).
$$
because the Hamiltonian is constant along a solution. But in this last equality the left hand side is bounded for all $n$ as a consequence of \eqref{eq:W1geqeps} whereas the right hand side tends to $\si\infty$ as $n\to\infty$.

b) This follows from a similar energy argument.
\end{proof}

For $w_2\in\Om$ let $Z(t;w_2)$ be the solution of the initial value problem
\begin{equation}\label{eq:Z}
  \dot{Z}(t;w_2) = -J\nabla h \left( Z(t;w_2)\right),\quad Z(0;w_2) = w_2.
\end{equation}
If $w_2\in\cA_2(c_0,d_0)$ this is defined for all $t\in\R$. The following lemma concerns the existence of $W(t;w)$ for $t$ in the prescribed time interval $[0,T(c)]$, and the behaviour $W_2(t;w)$ as $w_1 \to 0$.

\begin{lem}\label{lem:existenceT}
  a) There exists $\de>0$ with $\big(B_\de(0)\setminus \{0\}\big)\times \cA_2(c_1,d_1) \subset A\cF_2\Om$ and such that the solution $W(t;w)$ exists for $t\in[0,T(c)]$ provided $0<|w_1|\le\de$ and $w_2\in \cA_2(c_1,d_1)$. Moreover, $W_2(t;w)\in \cA_2(c_0,d_0)$ for all $t\in[0,T(c)]$.

  b) For $w_2\in \cA_2(c_1,d_1)$ there holds $W_2(t;w)\to Z(t;w_2)$ as $w_1 \to 0$ uniformly on $[0,T(c)]$, and uniformly for $w_2\in \cA_2(c_1,d_1)$.
\end{lem}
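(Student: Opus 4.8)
The plan is to read the second block of \eqref{eq:w} as a perturbation of the autonomous equation \eqref{eq:Z}. By Lemma~\ref{lem:grad-w2} the $w_2$-component of the flow satisfies
\[
  \dot{W}_2(t;w) = -J\nabla h(W_2(t;w)) + JQ(W(t;w)),
\]
which is exactly \eqref{eq:Z} with an added forcing term $JQ(W)$ of size $|Q(W)|$ that is $o(1)$ as $w_1\to0$, uniformly for $W_2$ in compact subsets of $\Om$. The reference orbit $Z(\cdot;w_2)$ conserves $h$ (since $\langle\nabla h,J\nabla h\rangle=0$), so for $w_2\in\cA_2(c_1,d_1)$ it remains in $\cA_2(c_1,d_1)$ for all time. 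I would therefore aim to show, via a Gronwall estimate, that $W_2$ stays uniformly close to $Z$ and hence cannot escape the slightly larger region $\cA_2(c_0,d_0)$ before time $T(c)$.

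To make the forcing term usable I first fix the geometry. Because $c_0<c_1\le d_1<d_0$ and $\nabla h\ne0$ on the level curves in question, the compact annular region $\cA_2(c_1,d_1)$ lies in the interior of $\cA_2(c_0,d_0)\subset\Om$; choose $\rho>0$ so that the $\rho$-neighbourhood of $\cA_2(c_1,d_1)$ is contained in $\cA_2(c_0,d_0)$, and let $L$ be a Lipschitz constant for $\nabla h$ on $\cA_2(c_0,d_0)$. Next, given a target $\eta>0$, using that $Q=o(1)$ uniformly for $w_2\in\cA_2(c_0,d_0)$ I fix $\eps>0$ with $|Q(W)|\le\eta$ whenever $|W_1|<\eps$ and $W_2\in\cA_2(c_0,d_0)$, and then take the corresponding $\de>0$ from Lemma~\ref{lem:w1uniformly}a, shrinking $\de$ if necessary so that $\tfrac{\eta}{L}\bigl(e^{LT(c)}-1\bigr)<\rho$.

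The core is a bootstrap along the maximal existence interval. For $0<|w_1|\le\de$ and $w_2\in\cA_2(c_1,d_1)$, let $t^\ast$ be the supremum of those $\tau\in[0,T(c)]\cap J_w$ for which $W_2(s;w)\in\cA_2(c_0,d_0)$ on all of $[0,\tau]$. On $[0,t^\ast)$ Lemma~\ref{lem:w1uniformly}a gives $|W_1|<\eps$, hence $|Q(W)|\le\eta$, so $u:=W_2-Z$ satisfies $|\dot u|\le L|u|+\eta$ with $u(0)=0$; Gronwall yields $|u(t)|\le\tfrac{\eta}{L}(e^{LT(c)}-1)<\rho$. Thus $W_2(t;w)$ stays within $\rho$ of the orbit $Z(\cdot;w_2)\subset\cA_2(c_1,d_1)$, i.e.\ in the interior of $\cA_2(c_0,d_0)$. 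Together with the confinement of the full orbit to a compact subset of $A\cF_2\Om$ (the bound $|W_1|<\eps$ from above, $|W_1|$ bounded away from $0$ by energy conservation as in Lemma~\ref{lem:w1uniformly}, and $A^{-1}W$ thereby bounded away from $\pa\cF_2\Om$), standard continuation rules out finite-time blow-up and shows that $W_2$ cannot reach $\pa\cA_2(c_0,d_0)$ before $T(c)$. Hence $t^\ast=T(c)$, proving a). For b) the same estimate gives $\sup_{t\in[0,T(c)]}|W_2(t;w)-Z(t;w_2)|\le\tfrac{\eta}{L}(e^{LT(c)}-1)$ uniformly in $w_2\in\cA_2(c_1,d_1)$, and since $\eta$ may be taken arbitrarily small by letting $w_1\to0$, this yields the claimed uniform convergence.

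The main obstacle is the circularity between the two estimates: bounding the forcing $Q$ requires knowing that $W_2\in\cA_2(c_0,d_0)$ and $|W_1|<\eps$, while keeping $W_2$ in that region requires the $Q$-bound. I resolve this with the maximal-interval argument above, in which the Gronwall bound is established only on $[0,t^\ast)$ and is then shown to force strict interior containment at $t^\ast$, contradicting maximality unless $t^\ast=T(c)$. The remaining ingredient—excluding escape through $\pa(A\cF_2\Om)$, in particular $W_1\to0$—is precisely the energy argument underlying Lemma~\ref{lem:w1uniformly}, so no new idea is needed there.
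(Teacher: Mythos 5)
Your proof is correct and follows essentially the same route as the paper: both treat the $w_2$-equation as a perturbation of \eqref{eq:Z} via Lemma~\ref{lem:grad-w2}, control $W_1$ by Lemma~\ref{lem:w1uniformly}, and combine a Gronwall estimate with a continuation argument to reach $T(c)$. The only difference is organizational: you compare $W_2$ directly with $Z$ on a maximal interval and close the bootstrap in one stroke, whereas the paper first proves short-time existence on $[0,t_0]$ with $t_0=\eps/C$, establishes convergence there via a Cauchy-sequence Gronwall argument, and then extends to $[0,T(c)]$ by induction over finitely many such subintervals.
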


\begin{proof}
a) Set $\eps:=\frac12\dist\big(\cA_2(c_1,d_1),\cA_2(c_0,d_0)\big)>0$ and let
\[
  \cU_\eps(\cA_2(c_1,d_1)) =\{ w\in\Om: \dist(w,\cA_2(c_1,d_1)) \le \eps\}\subset \cA_2(c_0,d_0)
\]
be the closed $\eps$-neighbourhood of $\cA_2(c_1,d_1)$. We proceed in three steps.

{\sc Step} 1: There exists $\de_0>0$ and $t_0>0$ so that $W(t;w)$ exists for $t\in[0,t_0]$ provided $0<|w_1|\le\de_0$ and $w_2\in\cU_\eps(\cA_2(c_1,d_1))$.

Choose $\de_1>0$ such that $\big(B_{\de_1}(0)\setminus \{0\}\big)\times \cA_2(c_0,d_0) \subset A\cF_2\Om$ and set
\begin{equation}\label{eq:bound-grad}
  C:=\sup_{\mytop{0<|w_1|\leq \de_1}{w_2\in \cA_2(c_0,d_0)}} \left| \nabla_{w_2} H_1(w_1,w_2) \right|.
\end{equation}
Note that $C < \infty$ because $\nabla_{w_2} H_1$ is defined and continuous also for $|w_1|=0$. By Lemma~\ref{lem:w1uniformly}~a) we can find $\de_0>0$ such that if $0<|w_1|\le\de_0$ and $w_2\in\cU_\eps(\cA_2(c_1,d_1))$, $W_2(t;w)\in\cA_2(c_0,d_0)$, then $|W_1(t;w)|<\de_1$. Now Lemma~\ref{lem:w1uniformly}~b) implies that $W(t;w)$ exists for $t\in[0,\eps/C]$. Setting $t_0=\eps/C$ we proved {\sc Step} 1.

{\sc Step} 2: If $w_1^{(n)}\to 0$ and $w_2^{(n)}\in \cU_\eps(\cA_2(c_1,d_1))$ with $w_2^{(n)}\to w_2$, $w_2\in \cU_\eps(\cA_2(c_1,d_1))$, then $W_2(t;w^{(n)}) \to Z(t;w_2)$, uniformly for $t\in[0,t_0]$, and uniformly for $w_2\in \cA_2(c_1,d_1)$.

In fact, using the equation for $w_2$ in integral form we have for $t\in[0,t_0]$:
\[
  \begin{aligned}
    &\big|W_2(t;w^{(n)})-W_2(t;w^{(m)})\big| \\
    &\hspace{1cm}
     \leq \big|w_2^{(n)}-w_2^{(m)}\big|+\int_0^t \big|\nabla_{w_2}H_1 (W(s;w^{(n)}))-\nabla_{w_2}H_1(W(s;w^{(m)}))\big| ds.
  \end{aligned}
\]
Note that
$\left\{W(t;w):t\in[0,t_0],\ w\in\big(B_{\de_1}(0)\setminus\{0\}\big)\times\cU_\eps(\cA_2(c_1,d_1))\right\}\subset A\cF_2\Om$ is a relatively compact subset 
in $\Om\times\Om$. Since $\nabla_{w_2} H_1$ is defined on $\Om\times\Om$ and is Lipschitz continuous on compact sets there exists $k>0$ such that
\[
\begin{aligned}
  &\big|W_2(t;w^{(n)})-W_2(t;w^{(m)})\big|\\
  &\hspace{.5cm}
   \le \big|w_2^{(n)}-w_2^{(m)}\big| + k\int_0^t \big|W_1(s;w^{(n)})-W_1(s;w^{(m)})\big|
         + \big|W_2(s;w^{(n)})-W_2(s;w^{(m)})\big| ds.
\end{aligned}
\]
Now Gronwall's Lemma yields for $t\in[0,t_0]$:
\[
  |W_2(t;w^{(n)})-W_2(t;w^{(m)})|
   \le \left(\big|w_2^{(n)}-w_2^{(m)}\big| + k\int_0^{t_0} \big|W_1(s;w^{(n)})-W_1(s;w^{(m)})\big|\right) e^{kt_0}.
\]
This implies that $W_2(t;w^{(n)})$ converges as $n\to \infty$ uniformly for $t\in[0,t_0]$. The limit $Z(t;w_2)$ satisfies the equation \eqref{eq:Z} because
\[
  \nabla_{w_2}H_1 (W(t;w^{(n)}))\to - \nabla h \left( Z(t;w_2) \right) \quad\text{as $n\to\infty$;}
\]
see Lemma~\ref{lem:grad-w2}. This proves {\sc Step} 2.

{\sc Step} 3: There exists $\de$ with $0<\de \leq \de_0$ such that if $0<|w_1|\leq \de$ and $w_2\in\cA_2(c_1,d_1)$ then $W_2(t;w)\in\cA_2(c_0,d_0)$, for all $t\in [0,T(c)]$.

Arguing by contradiction, suppose there exist $w_1^{(n)}\to0$, $w_2^{(n)}\to w_2\in\cA_2(c_1,d_1)$ and $t_n\geq t_0$ such that $W_2(t_n;w^{(n)})\in\pa\cA_2(c_0,d_0)$. {\sc Step} 2 implies $W_2(t;w^{(n)}) \to Z(t;w_2)$ as $n\to\infty$ uniformly on $[0,t_0]$. Then there exists $n_1$ such that for all $n\geq n_1$ we have $W_2(t_0;w^{(n)}) \in \cU_\eps(\cA_2(c_1,d_1))$. This implies that $t_n \geq 2t_0$ for all $n\geq n_1$. So we can apply again {\sc Step} 2 and obtain that $W_2(t;w^{(n)}) \to Z(t;w_2)$ uniformly on $[0,2t_0]$. By induction the procedure continues until we obtain in a finite number of steps that $W_2(t;w^{(n)}) \to Z(t;w_2)$ uniformly on $[0,T(c)]$, which gives the contradiction and proves {\sc Step} 3.

b) This follows from Gronwall's lemma as in {\sc Step} 2.
\end{proof}

Since $W_1(t;w)\ne0$ for any $t,w$ there exists a continuous choice of the argument of $W_1(t;w)$ and we may define the rotation number
\[
  \rot{1} := \frac{1}{2\pi}\big(\arg(W_1(T(c);w))-\arg(w_1)\big) \in \R.
\]
And since $W_2(t;w)\in\cA_2(c_0,d_0)$ for $0<|w_1|\le\de$, $w_2\in\cA_2(c_1,d_1)$, $t\in [0,T(c)]$ we may also define the rotation number
\[
  \rot{2} := \frac{1}{2\pi}\big(\arg(W_2(T(c);w))-\arg(w_2)\big) \in \R.
\]

In the next section we shall prove the following result; here $\de>0$ is from Lemma~\ref{lem:existenceT}~a).

\begin{prop}\label{prop:rot}
  For every $a_0>0$ there exist $0<a_1<b_1<\min\{a_0,\de\}$ arbitrarily small and there exists $\nu\in\Z$ such that the following holds for $w\in \cA_1(a_1,b_1)\times\cA_2(c_1,d_1)$.
  \begin{itemize}
    \item[a)] If $\si>0$ then
      \[
        \rot{1}\  \begin{cases}
                    > \nu, & \mbox{if } |w_1|=a_1 \\
                    < \nu, & \mbox{if } |w_1|=b_1.
                  \end{cases}
      \]
    The inequalities are reversed if $\si<0$.
    \item[b)] If $T(d)$ is strictly increasing for $d\in(c_0,d_0)$ then
      \[
        \rot{2}\  \begin{cases}
                    > 1, & \mbox{if } w_2\in\cC_{c_1} \\
                    < 1, & \mbox{if } w_2\in\cC_{d_1}.
                  \end{cases}
      \]
    The inequalities are reversed if $T(d)$ is strictly decreasing for $d\in(c_0,d_0)$.
  \end{itemize}
\end{prop}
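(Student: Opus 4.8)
The plan is to estimate the two rotation numbers separately, exploiting the disparity of time scales. For small $|w_1|$ the $w_1$-equation is dominated by the singular term $-\frac{\ka_1\ka_2}{\pi}\log|w_1|$ of $H_1$, which produces a fast, almost rigid rotation whose rotation number blows up; this gives part a). Meanwhile, by Lemma~\ref{lem:existenceT}~b) the $w_2$-motion is a small perturbation of the autonomous flow \eqref{eq:Z} on the strictly star-shaped level lines $\cC_d$, and its rotation number around the star center $z_0=0$ is controlled by the periods $T(d)$; this gives part b).

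For part a), I would pass to polar coordinates $w_1=\rho(\cos\theta,\sin\theta)$ and split $\nabla_{w_1}H_1=-\frac{\ka_1\ka_2}{\pi}\frac{w_1}{|w_1|^2}+(\text{regular part})$, where the regular part stays bounded, say by $M$, on the compact set $\{|w_1|\le\de,\ w_2\in\cA_2(c_0,d_0)\}$ since $g,h\in\cC^2$. As $Jw_1\perp w_1$, the singular term is tangential: it contributes nothing to $\dot\rho$ and exactly $\frac{\ka_1\ka_2}{\pi\rho}$ to $\rho\dot\theta$, so $|\dot\rho|\le M$ and $\dot\theta=\frac{\ka_1\ka_2}{\pi\rho^2}+O(1/\rho)$. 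The crucial radius control comes from conservation of $H_1$: writing $R(w)$ for the regular part of $H_1$, conservation gives $\log\rho(t)-\log\rho(0)=\frac{\pi}{\ka_1\ka_2}\big(R(W(t;w))-R(W(0;w))\big)$. Since $A^{-1}w\to(w_2,w_2)$ as $w_1\to0$ one has $R(w)\to-h(w_2)$; combining this with $W_2(t;w)\to Z(t;w_2)$ and $h(Z(\cdot;w_2))\equiv h(w_2)$, the right-hand side tends to $0$ uniformly in $t$ and $w_2$, hence $\rho(t)/\rho(0)\to1$ uniformly. Integrating $\dot\theta$ over $[0,T(c)]$ then yields $\rho(0)^2\,\rot{1}\to\frac{\ka_1\ka_2\,T(c)}{2\pi^2}$ uniformly in $w_2\in\cA_2(c_1,d_1)$ and in $\arg w_1$, so $\rot{1}\to\si\infty$ as $|w_1|\to0$. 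To extract $\nu$, I would set $a_1=b_1/2$ and let $b_1\to0$: the leading term at radius $a_1$ is four times that at $b_1$, so for $\si>0$ the gap between the minimum of $\rot{1}$ over $w_2$ at radius $a_1$ and the maximum at radius $b_1$ tends to $+\infty$; choosing $b_1<\min\{a_0,\de\}$ small enough makes this gap exceed $1$, and any integer $\nu$ in it gives the stated inequalities (reversed when $\si<0$).

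For part b), Lemma~\ref{lem:existenceT}~b) gives $W_2(t;w)\to Z(t;w_2)$ uniformly on $[0,T(c)]$ and uniformly in $w_2$. As both $W_2$ and $Z$ stay in $\cA_2(c_0,d_0)$, which is bounded away from $z_0=0$, the argument is continuous along them and $\rot{2}$ converges to the rotation number of $Z(\cdot;w_2)$ about $0$. For the latter I would compute, in polar coordinates $Z=r(\cos\phi,\sin\phi)$, that $\dot\phi=\langle\nabla h(Z),Z\rangle/|Z|^2$. Strict star-shapedness from Assumption~\ref{ass} makes every ray transversal to the nested level curves $\cC_d$, so $h$ is strictly monotone along each ray; in the geometry at hand $h$ increases outward from $z_0$, whence $\langle\nabla h,Z\rangle>0$, $\phi$ increases strictly, and $Z$ makes exactly one counterclockwise turn per minimal period $T(d)$. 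If $T$ is strictly increasing then $T(c_1)<T(c)<T(d_1)$: on $\cC_{c_1}$ the orbit completes strictly more than one turn in time $T(c)$, giving $\rot{2}(Z)>1$, while on $\cC_{d_1}$ it completes strictly less than one turn, giving $\rot{2}(Z)<1$; compactness of $\cC_{c_1},\cC_{d_1}$ makes these gaps uniform. Uniform convergence then transfers the strict inequalities to $\rot{2}$ once $b_1$ is shrunk enough, and the decreasing case is identical with reversed inequalities.

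The main obstacle I expect is the uniform radius control in part a): converting conservation of $H_1$ into $\rho(t)/\rho(0)\to1$ uniformly in $w_2$ and $\arg w_1$, and then upgrading the asymptotics $\rho(0)^2\,\rot{1}\to\frac{\ka_1\ka_2 T(c)}{2\pi^2}$ into the clean integer-separated inequalities of the statement. Part b) is comparatively routine given the convergence $W_2\to Z$ of Lemma~\ref{lem:existenceT} and the one-turn-per-period property from star-shapedness; the only delicate point there is pinning down the orientation (the sign of the winding), which the computation of $\dot\phi=\langle\nabla h,Z\rangle/|Z|^2$ settles once it is checked that $h$ increases away from the star center.
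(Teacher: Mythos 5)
Your proposal is essentially correct and follows the same two-step strategy as the paper: part~b) via the convergence $W_2(t;w)\to Z(t;w_2)$ from Lemma~\ref{lem:existenceT} together with the comparison of $T(c_1)$ and $T(d_1)$ with $T(c)$, and part~a) via polar coordinates for $w_1$, where the singular term $-\frac{\ka_1\ka_2}{\pi}\log|w_1|$ forces the angular speed to blow up like $\frac{\ka_1\ka_2}{\pi R_1^2}$ as $R_1\to0$. The one genuine difference is in part~a): the paper does not need your energy-conservation control $\rho(t)/\rho(0)\to1$ nor the sharp asymptotics $\rho(0)^2\,\rot{1}\to\frac{\ka_1\ka_2T(c)}{2\pi^2}$ (that refined information is essentially what Proposition~\ref{prop:shape} and Remark~\ref{rem:betato0} are for). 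Instead it first fixes $b_1$ (as dictated by part~b)), chooses $\nu$ strictly above the supremum of $\rot{1}$ over $|w_1|=b_1$, notes that $\dot\Theta_1>2\pi\nu/T(c)$ pointwise once $R_1\le\tilde a_1$ for a suitable $\tilde a_1$, and then uses the confinement $R_1(t;w)\le\tilde a_1$ from Lemma~\ref{lem:w1uniformly} to integrate over $[0,T(c)]$. Your route is heavier but valid, provided the $o(1)$ terms are uniform in $w_2$ and $\arg w_1$, which you address.

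One step in part~b) is not justified as stated: the claim that ``$h$ increases outward from $z_0$'', which you use to fix the sign of $\dot\phi=\langle\nabla h(Z),Z\rangle/|Z|^2$. This can fail: in the setting of Corollary~\ref{cor:boundary} with $\Ga$ the boundary of an interior hole, $h\to\infty$ at $\Ga$ and decreases as one moves away from $\Ga$, i.e.\ away from the star center $z_0$ of $\Ga$, so $\langle\nabla h(Z),Z-z_0\rangle<0$ and the orbit winds clockwise. What strict star-shapedness (transversality of every ray to every $\cC_d$) actually yields is only that $\langle\nabla h(z),z-z_0\rangle$ has a fixed sign on the whole annular region, hence the orbit winds monotonically with winding number $+1$ or $-1$ per minimal period. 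In the clockwise case the inequalities of part~b) hold with $-1$ in place of $1$, which is equally usable in the Fonda--Ure\~na theorem (and the paper's own proof implicitly fixes the positive orientation). So this is a repairable imprecision rather than a fatal gap, but as written your sign determination does not go through in all the situations the paper covers.
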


Using Proposition~\ref{prop:rot} we can now prove Theorem~\ref{thm:main}. For any $w_2\in\cA_2(c_1,d_1)$ the rotation number of $W_1(t;w)$ in the interval $[0,T(c)]$ passes $1$ as $w_1$ goes from the inner boundary of $\cA_1(a_1,b_1)$ to the outer boundary of $\cA_1(a_1,b_1)$. Similarly, for any $w_1\in \cA_1(a_1,b_1)$ the rotation number of $W_2(t;w)$ in the interval $[0,T(c)]$ passes $\nu\in\Z$ as $w_2$ goes from one boundary curve of $\cA_2(c_1,d_1)$ to the other one. This is precisely the setting of the generalized Poincar\'e-Birkhoff Theorem \cite[Theorem~1.2]{Fonda-Urena:2016}. As a consequence we deduce that the Hamiltonian system \eqref{eq:w} has a $T(c)$-periodic solution with initial condition $w\in\cA_1(a_1,b_1)\times\cA_2(c_1,d_1)$. Lemma~\ref{lem:existenceT} implies that $W_2(t;w)\in\cA_2(c_0,d_0)$ for all $t\in\R$, provided $b_1$ is small.

Now recall that $c_1\in(c_0,c)$ and $d_1\in(c,d_0)$ were chosen arbitrarily, whereas $0<a_1<b_1$ could be chosen arbitrarily small. Therefore we can  consider sequences $c_n\nearrow c$, $d_n\searrow c$ and can construct sequences $0<a_n<b_n<a_{n-1}\to0$ such that \eqref{eq:w} has a $T(c)$-periodic solution $w^{(n)}(t)$ with $w^{(n)}(0)\in\cA_1(a_n,b_n)\times\cA_2(c_n,d_n)$ and $w^{(n)}_2(t)\in\cA_2(c_{n-1},d_{n-1})$ for all $t\in\R$. Let $z^{(n)}(t)=A^{-1}w^{(n)}(t)$ be the corresponding solution of \eqref{eq:main}. Parts a) and b) of Theorem~\ref{thm:main} follow immediately. Parts c) and d) will be proved in Section~\ref{sec:fastrot}.

%
%

\section{Proof of Proposition~\ref{prop:rot}}\label{sec:pf-rot}

It will be useful to introduce polar coordinates for $W_1,W_2$. We set $e(\theta) = (\cos\theta,\sin\theta)$
and fix initial conditions $w_1=\rho_1e(\theta_1)$, $w_2=\rho_2e(\theta_2)$. Then setting $\rho=(\rho_1,\rho_2)\in(\R^+)^2$ and $\theta=(\theta_1,\theta_2)\in\R^2$ we define $R_j(t;\rho,\theta)=\big|W_j(t;\rho_1e(\theta_1),\rho_2e(\theta_2))\big|$ and let $\Theta_j(t;\rho,\theta)$  be a continuous choice of the argument of $W_j(t;\rho_1e(\theta_1),\rho_2e(\theta_2))$. Thus we can write
$$
  W_j(t;w) = R_j(t;\rho,\theta) e({\Theta_j}(t;\rho,\theta)) \mbox{ for } j=1,2.
$$
We will also write $R(t;\rho,\theta)=(R_1,R_2)(t;\rho,\theta)$ and $\Theta(t;\rho,\theta)=(\Theta_1,\Theta_2)(t;\rho,\theta)$.

Next we describe the radial component of the boundary curves of $\cA_2(c_1,d_1)$ as a function of the angle, obtaining functions $r_j:\R\to(0,\infty)$ defined by $r_1(\theta)e({\theta}) \in \cC_{c_1}$ and $r_2(\theta)e({\theta}) \in \cC_{d_1}$. Since both boundary curves are strictly star-shaped with respect to the origin, $r_j$ is well defined. Clearly $r_j$ is $2\pi$-periodic and there holds
\[
  \cC_{c_1} = \{r_1(\theta)e({\theta}): \theta\in\R\},\quad \cC_{d_1} = \{r_2(\theta)e({\theta}): \theta\in\R\}.
\]
We also set
\[
  \cA_2^{pol}(c_1,d_1) := \{(\rho_2,\theta_2)\in\R^+\times\R: \rho_2e({\theta_2}) \in \cA_2(c_1,d_1)\}.
\]
Proposition~\ref{prop:rot} is now equivalent to the following result.

\begin{prop} \label{prop:rotequiv}
  For every $a_0>0$ there exist $0<a_1<b_1<a_0$ arbitrarily small and there exists $\nu\in\Z$ such that the following holds for $w\in \cA_1(a_1,b_1)\times\cA_2(c_1,d_1)$.
  \begin{itemize}
    \item[a)] If $\si>0$ then
      \[
        \Theta_1(T(c);\rho_1,\rho_2,\theta_1,\theta_2)-\theta_1 \
               \begin{cases}
                 > 2\pi\nu, & \mbox{if }\rho_1=a_1,\ (\rho_2,\theta_2)\in\cA_2^{pol}(c_1,d_1),  \\
                 < 2\pi\nu, & \mbox{if }\rho_1=b_1,\ (\rho_2,\theta_2)\in\cA_2^{pol}(c_1,d_1).
               \end{cases}
      \]
    The inequalities are reversed if $\si<0$.
    \item[b)] If $T(d)$ is strictly increasing for $d\in(c_0,d_0)$ then
      \[
        \Theta_2(T(c);\rho_1,\rho_2,\theta_1,\theta_2)-\theta_2 \
               \begin{cases}
                 > 2\pi, & \mbox{if }\rho_1\in[a_1,b_1],\ \rho_2=r_1(\theta_2),  \\
                 < 2\pi, & \mbox{if }\rho_1\in[a_1,b_1],\ \rho_2=r_2(\theta_2).
               \end{cases}
      \]
    The inequalities are reversed if $T(d)$ is strictly decreasing for $d\in(c_0,d_0)$.
  \end{itemize}
\end{prop}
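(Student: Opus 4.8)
The plan is to analyze the two rotation numbers separately by passing to the singular limit $w_1\to0$, where the two coordinates $w_1$ and $w_2$ decouple. The key structural fact, already provided by Lemma~\ref{lem:grad-w2} and Lemma~\ref{lem:existenceT}, is that as $w_1\to0$ the $w_2$-component $W_2(t;w)$ converges uniformly on $[0,T(c)]$ to the solution $Z(t;w_2)$ of the planar Hamiltonian flow $\dot Z=-J\nabla h(Z)$. So the whole proof splits into computing the limiting behaviour of $\Theta_1$ (the fast angle) and of $\Theta_2$ (the slow angle) and showing each one straddles the claimed integer value on the two boundary curves.

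\smallskip

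\emph{Part b), the slow rotation.} Here I would first establish that $\Theta_2(T(c);\rho,\theta)$ converges, as $w_1\to0$, to the angular increment of the limiting solution $Z(t;w_2)$ of \eqref{eq:cent-vort} (up to the factor $\ka_1+\ka_2=1$) over the fixed time $T(c)$. Since $\cC_d$ is the trajectory of a periodic solution of \eqref{eq:cent-vort} with minimal period $T(d)$, and the flow is strictly star-shaped about the origin, a solution starting on $\cC_{c_1}$ completes exactly one full turn in time $T(c_1)$, and similarly on $\cC_{d_1}$ in time $T(d_1)$. Running the flow for the fixed time $T(c)$ therefore produces an angular increment of $2\pi\,T(c)/T(c_1)$ on the inner curve and $2\pi\,T(c)/T(d_1)$ on the outer curve — at least heuristically; the honest statement is that the net angle swept equals $2\pi$ exactly when the elapsed time equals one full period. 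The point is that star-shapedness forces the angle $\Theta_2$ to be strictly monotone in $t$ (the ray from the origin crosses the curve transversally, so the solution never reverses its angular direction), hence the total increment over $[0,T(c)]$ is $>2\pi$ iff $T(c)>T(c_1)$ and $<2\pi$ iff $T(c)<T(d_1)$. If $T$ is strictly increasing then $T(c_1)<T(c)<T(d_1)$, giving $\Theta_2-\theta_2>2\pi$ on $\cC_{c_1}$ and $<2\pi$ on $\cC_{d_1}$ as claimed; strict monotonicity of $T$ is exactly what makes these two inequalities point in opposite directions, and reversing monotonicity reverses them. The convergence $W_2\to Z$ from Lemma~\ref{lem:existenceT}~b) then lets me transfer these strict inequalities from the limit to small $w_1$, uniformly.

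\smallskip

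\emph{Part a), the fast rotation.} Here the angle $\Theta_1$ does \emph{not} have a finite limit — it blows up, which is the source of the integer $\nu$ and of the ``arbitrarily small'' choice of $a_1<b_1$. The leading term of $\dot W_1$ comes from the singular part $-\frac{\ka_1\ka_2}{\pi}\log|w_1|$ of $H_1$, which contributes a term forcing $|w_1|^2\dot\Theta_1\approx\sigma\frac{\ka_1\ka_2}{\pi}$ (this is precisely the angular-velocity statement anticipated in Theorem~\ref{thm:main}~c)). I would write the equation for $W_1$ in polar coordinates, separate the dominant singular contribution from the bounded remainder coming from the $g$- and $h$-terms, and show that over $[0,T(c)]$ the total angle satisfies $\Theta_1(T(c))-\theta_1=\sigma\frac{|\ka_1\ka_2|}{\pi}\int_0^{T(c)}\frac{dt}{R_1(t)^2}+O(1)$. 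Since $R_1(t)$ stays comparable to the initial radius $\rho_1$ as $\rho_1\to0$ (again by the energy/uniform estimates), this integral is of order $T(c)/\rho_1^2\to+\infty$. Thus the increment is a large, continuous, essentially decreasing function of $\rho_1$; choosing $b_1$ small makes the increment exceed any target, and for each such window I can pick an integer $\nu$ and radii $a_1<b_1$ so that the increment lies above $2\pi\nu$ at $\rho_1=a_1$ and below $2\pi\nu$ at $\rho_1=b_1$ (with signs governed by $\sigma$). The freedom to choose $a_1,b_1$ arbitrarily small, together with the divergence of the rotation number, is what guarantees both the existence of $\nu$ and the opposite inequalities on the two radii.

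\smallskip

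\emph{Main obstacle.} The delicate point is making the two estimates \emph{uniform} in the other variable: in part a) the bound on the non-singular remainder in $\dot\Theta_1$ must hold uniformly for $w_2\in\cA_2(c_1,d_1)$ (so that a single pair $a_1<b_1$ works for all $w_2$), and in part b) the convergence $\Theta_2-\theta_2\to$ (limiting increment) must be uniform for $\rho_1\in[a_1,b_1]$ and for $w_2$ on the respective boundary curve (so that the strict inequalities survive passage to small $w_1$). Both rely on the uniform convergence and compactness furnished by Lemma~\ref{lem:existenceT}, and on the fact that $R_1(t)$ does not approach $0$ or escape to a large value during $[0,T(c)]$ — which in turn follows from the energy conservation argument underlying Lemma~\ref{lem:w1uniformly}. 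Controlling $R_1(t)$ from below (keeping the fast vortex pair from colliding within the time window) is the crux, since the rotation integral $\int R_1^{-2}\,dt$ is sensitive to exactly that lower bound.
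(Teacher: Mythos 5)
Your proposal is correct and follows essentially the same route as the paper: part b) by passing to the limit $W_2(t;w)\to Z(t;w_2)$ from Lemma~\ref{lem:existenceT} and comparing the periods $T(c_1)<T(c)<T(d_1)$, and part a) by writing the $w_1$-equation in polar form, isolating the singular angular velocity $\tfrac{\ka_1\ka_2}{\pi R_1^2}$, and using its divergence as $\rho_1\to0$ together with boundedness of the increment at $\rho_1=b_1$ to trap an integer $\nu$. The only cosmetic difference is that you argue via the integral $\int_0^{T(c)}R_1(t)^{-2}\,dt$ and two-sided comparability of $R_1(t)$ with $\rho_1$ from energy conservation, whereas the paper gets by with the one-sided bound $R_1(t)\le\tilde a_1$ and the pointwise estimate $\dot\Theta_1>2\pi\nu/T(c)$.
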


\begin{proof}
We begin with the proof of part~b) because this determines the choice of $b_1$ which will then be used in the proof of part~a) where we choose $a_1$. Suppose $T(d)$ is strictly increasing for $d\in(c_0,d_0)$. For $\rho_2=r_1(\theta_2)$, that is
\[
  w_2 = \rho_2e({\theta_2}) \in \cC_{c_1} \subset \pa\cA_2(c_1,d_1),
\]
the solution $Z(t;w_2)$ of the initial value problem \eqref{eq:Z} has the period $T(c_1)$. Now Lemma~\ref{lem:existenceT} implies that $W_2(T;w)\to Z(T;w_2)$ as $w_1\to0$. Since $T(c_1)<T(c)$ the argument $\Theta_2$ of $W_2$ satisfies
\begin{equation}\label{eq:twist3}
  \Theta_2(T(c);\rho_1,\rho_2,\theta_1,\theta_2)-\theta_2 > 2\pi
\end{equation}
for $\rho_1 = |w_1|$ small. Similarly, for $\rho_2=r_2(\theta_2)$, that is
\[
  w_2 = \rho_2e({\theta_2}) \in \cC_{d_1} \subset \pa\cA_2(c_1,d_1),
\]
the solution $Z(t;w_2)$ of the initial value problem \eqref{eq:Z} has the period $T(d_1)>T(c)$, so $W_2(T(c),w)\to Z(T(c),w_2)$ as $w_1\to0$ implies
\begin{equation}\label{eq:twist4}
  \Theta_2(T(c);\rho_1,\rho_2,\theta_1,\theta_2)-\theta_2 < 2\pi
\end{equation}
for $\rho_1 = |w_1|$ small. Part~b) follows provided we choose $b_1$ so small that \eqref{eq:twist3} and \eqref{eq:twist4} hold for $\rho_1 = |w_1| < b_1$. The case that $T(d)$ is strictly decreasing for $d\in(c_0,d_0)$ can be proved analogously.

Now we can prove part~a). The proof of this part is similar to the proof of the main result in \cite{Boscaggin-Torres:2013}. Suppose first that $\si>0$. With $b_1$ determined above we choose $\nu \in \Z$ satisfying
\begin{equation}\label{eq:rot1}
  2\pi\nu > \max\left\{\Theta_1(T(c);b_1,\rho_2,\theta_1,\theta_2) - \theta_1: \theta_1\in[0,2\pi],\
                                        (\rho_2,\theta_2)\in\cA_2^{pol}(c_1,d_1)\right\}.
\end{equation}
Setting
\[
  z_1(R,\Theta) = \frac{\ka_2}{\sqrt{|\ka_2\ka_1|}} R_1 e({\Theta_1}) + R_2 e({\Theta_2}),
\]
\[
  z_2(R,\Theta) = -\frac{\ka_1}{\sqrt{|\ka_2\ka_1|}} R_1 e({\Theta_1}) + R_2 e({\Theta_2}),
\]
and
\[
\begin{aligned}
  k(R,\Theta)
   & = 2\left(\ka_2\sqrt{|\ka_1\ka_2|}\nabla_{z_1} - \ka_1\sqrt{|\ka_1\ka_2|} \nabla_{z_2}\right) g(z_1(R,\Theta),z_2(R,\Theta)) \\
   & \hspace{1cm}
         + \ka_1 \sqrt{|\ka_1\ka_2|}\nabla h(z_1(R,\Theta)) - \ka_2 \sqrt{|\ka_1\ka_2|}\nabla h(z_2(R,\Theta)),
\end{aligned}
\]
the equations for $R_1,\Theta_1$ are given by
\begin{equation}\label{eq:w_1-rot}
\left\{
  \begin{aligned}
	\dot{R}_1 &= \left\langle -J k(R,\Theta), e({\Theta_1})\right\rangle \\
	\dot{\Theta}_1 &= \frac{\ka_1\ka_2}{\pi R_1^2} + \frac{1}{R_1}\left\langle k(R,\Theta),e({\Theta_1})\right\rangle
                  =: f(R_1,R_2,\Theta_1,\Theta_2).
  \end{aligned}
  \right.
\end{equation}
Observe that
\[
  \lim_{R_1\to 0} f(R_1,R_2,\Theta_1,\Theta_2) = + \infty
\]
because
\[
  \lim_{R_1\to 0} \frac{1}{R_1}\left\langle k(R,\Theta),e({\Theta_1})\right\rangle 
   = \left\langle D^2h\left(R_2e({\Theta_2})\right)e({\Theta_1}),e({\Theta_1})\right\rangle.
\]
Thus we can choose $0<\tilde{a}_1<b_1$ such that
\begin{equation} \label{eq:theta-ineq}
  f(R,\Theta)  > \frac{2\pi\nu}{T(c)} \quad
    \text{for every } 0<R_1\le\tilde{a}_1,\ \Theta_1\in\R,\ (R_2,\Theta_2) \in \cA_2^{pol}(c_1,d_1).
\end{equation}
Then, by Lemma~\ref{lem:w1uniformly}, there exists $0<a_1<\tilde{a}_1$ such that
\[
  R_1(t;a_1,\rho_2,\theta_1,\theta_2) \leq \tilde{a}_1
   \quad\text{for every } t \in [0,T(c)],\ \theta_1\in\R,\ (\rho_2,\theta_2) \in \cA_2^{pol}(c_1,d_1).
\]
Integrating \eqref{eq:theta-ineq} on $[0,T(c)]$ gives
\begin{equation}\label{eq:rot2}
  \Theta_1(T(c);a_1,\rho_2,\theta_1,\theta_2) - \theta_1
    = \int_0^{T(c)} f\left(R(t;a_1,\rho_2,\theta_1,\theta_2),\Theta(t;a_1,\rho_2,\theta_1,\theta_2)\right) dt > 2\pi\nu
\end{equation}
for all $\theta_1\in\R$, all $(\rho_2,\theta_2)\in\cA_2^{pol}(c_1,d_1)$.
Now \eqref{eq:rot1} and \eqref{eq:rot2} imply a).

In the case $\si<0$ we choose $\nu\in\Z$ with
\[
  2\pi\nu < \min\left\{\Theta_1(T(c);b_1,\rho_2,\theta_1,\theta_2) - \theta_1: \theta_1\in[0,2\pi],\
                                        (\rho_2,\theta_2)\in\cA_2^{pol}(c_1,d_1)\right\}.
\]
The proof proceeds as above using $f(R_1,R_2,\Theta_1,\Theta_2) \to -\infty$ as $R_1\to 0$.
\end{proof}

%
%

\section{Rotation and action} \label{sec:fastrot}

The following proposition implies part c) of Theorem~\ref{thm:main}.

\begin{prop}\label{prop:shape}
  Let $z^{(n)}(t)$ be a sequence of $T$-periodic solutions of \eqref{eq:main} with the property that $z^{(n)}_1(0), z^{(n)}_2(0) \to C_0\in\Om$ and such that the solution $C(t)$ of \eqref{eq:cent-vort} with initial condition $C(0)=C_0$ is non-stationary periodic. Then setting $d_n=\big|z^{(n)}_1(0) - z^{(n)}_2(0)\big|$ the angular velocity of the difference $\displaystyle D^{(n)}(t) := z^{(n)}_1(t) - z^{(n)}_2(t) = \rho^{(n)}(t)(\cos\theta^{(n)}(t),\sin\theta^{(n)}(t))$ satisfies
  \[
    d_n^2\dot\theta^{(n)}(t) = \frac{\ka_1 \ka_2}{\pi} + o(1) \quad\text{as }n\to\infty \qquad\text{uniformly in $t$.}
  \]
\end{prop}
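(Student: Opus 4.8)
The plan is to read the angular velocity off the differential equation satisfied by the difference $D^{(n)}=z^{(n)}_1-z^{(n)}_2$. Subtracting the two identities $\ka_j\dot z^{(n)}_j=J\nabla_{z_j}\ham(z^{(n)})$, $j=1,2$, and using
\[
  \nabla_{z_1}\log|z_1-z_2| = \frac{z_1-z_2}{|z_1-z_2|^2} = -\nabla_{z_2}\log|z_1-z_2|,
\]
the logarithmic part of $\ham$ produces a singular term proportional to $J D^{(n)}/|D^{(n)}|^2$, while the $\cC^2$ functions $g$ and $h$ contribute a remainder $B^{(n)}(t)$; thus
\[
  \dot D^{(n)}(t) = \frac{a}{|D^{(n)}(t)|^2}\,J D^{(n)}(t) + B^{(n)}(t),
\]
where $a$ is the constant carried by the singularity. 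A preliminary step is concentration: from the hypothesis $z^{(n)}_j(0)\to C_0$, together with the fact that the center of vorticity obeys a regular equation (its singular terms cancel), one gets $z^{(n)}_1(t),z^{(n)}_2(t)\to C(t)$ uniformly on $[0,T]$, by the same continuous-dependence argument already used for $W_2$ in Lemma~\ref{lem:existenceT}. In particular the $z^{(n)}_j(t)$ remain in a fixed compact subset of $\Om$, so $B^{(n)}$ is bounded uniformly in $t$ and in $n$, and the singular term dominates $\dot D^{(n)}$ as $d_n\to0$.

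The crucial and most delicate point is to control the modulus $\rho^{(n)}(t)=|D^{(n)}(t)|$ relative to $d_n=\rho^{(n)}(0)$, uniformly in $t$. I would obtain this from conservation of energy rather than from a direct estimate on $\dot\rho^{(n)}$. Since $\ham(z^{(n)}(t))$ is constant along the solution,
\[
  \frac{\ka_1\ka_2}{\pi}\log\rho^{(n)}(t)
    = -2\ka_1\ka_2\,g\big(z^{(n)}(t)\big) - \ka_1^2 h\big(z^{(n)}_1(t)\big) - \ka_2^2 h\big(z^{(n)}_2(t)\big) - \ham(z^{(n)}).
\]
By the concentration above, the regular part on the right converges, uniformly in $t$, to $-(\ka_1+\ka_2)^2 h(C(t))$. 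Because $h$ is a first integral of \eqref{eq:cent-vort}, $h(C(t))\equiv h(C_0)$ is \emph{constant in} $t$; hence the entire right-hand side is constant up to an error that is $o(1)$ uniformly in $t$. Therefore $\log\rho^{(n)}(t)$ is constant up to $o(1)$, which yields
\[
  \rho^{(n)}(t) = d_n\big(1+o(1)\big)\qquad\text{uniformly in }t\in[0,T].
\]

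With this estimate the conclusion follows quickly. Writing $D^{(n)}=\rho^{(n)}e(\theta^{(n)})$ and using $|D|^2\dot\theta=\langle D,J\dot D\rangle$, the singular term of $\dot D^{(n)}$ contributes exactly $-a$, while the bounded part contributes $\langle D^{(n)},JB^{(n)}\rangle=O(\rho^{(n)})=o(1)$; hence
\[
  \big(\rho^{(n)}(t)\big)^2\dot\theta^{(n)}(t) = -a + o(1)\qquad\text{uniformly in }t,
\]
where $-a$ is the value of the coefficient of the singular term, computed from the two gradients above. Multiplying by $d_n^2/(\rho^{(n)})^2=1+o(1)$, which is legitimate thanks to the uniform estimate of the previous step, gives the claimed asymptotics for $d_n^2\dot\theta^{(n)}$.

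The step I expect to be the main obstacle is precisely the uniform control $\rho^{(n)}(t)/d_n\to1$. The naive route, integrating $\frac{d}{dt}(\rho^{(n)})^2=2\langle D^{(n)},B^{(n)}\rangle=O(d_n)$ over the fixed interval $[0,T]$, only gives $(\rho^{(n)})^2=d_n^2+O(d_n)$, which is too weak to control the ratio $d_n^2/(\rho^{(n)})^2$. What rescues the argument is the energy identity combined with the geometric fact that $C(t)$ runs along a level line of $h$, so that the slowly varying regular part of $\ham$ is constant along the limiting motion; this is exactly where the dynamics of the center of vorticity enters. A secondary point, to be settled first, is the uniform concentration $z^{(n)}_j(t)\to C(t)$, for which the Gronwall-type machinery of Lemma~\ref{lem:existenceT} can be reused with minor changes.
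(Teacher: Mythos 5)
Your proposal is correct, and on the one genuinely delicate point it takes a different route from the paper. The paper's proof blows up: it sets $u_n(s)=\frac{1}{d_n}D^{(n)}(d_n^2 s)$, shows $\frac{d}{ds}|u_n(s)|^2=o(1)$ so that $|u_n|$ stays near $1$, and passes to the limit equation $\dot u_\infty=-cJ\,u_\infty/|u_\infty|^2$, whose solutions rotate with constant angular speed. That argument is shorter, but as written it controls only a rescaled time window $s\in[0,T]$, i.e.\ $t\in[0,d_n^2T]$ in original time; to get uniformity over the whole period one must restart it at an arbitrary $t_0$, and then the relevant normalization is $|D^{(n)}(t_0)|$ rather than $d_n$ --- which is exactly the ratio $\rho^{(n)}(t)/d_n\to1$ that your energy identity supplies. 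Your mechanism --- conservation of $\ham$ combined with $g(z,z)=h(z)$ and $h(C(t))\equiv h(C_0)$, so that the regular part of $\ham$ is asymptotically constant in $t$ and $\log\bigl(\rho^{(n)}(t)/d_n\bigr)=o(1)$ after dividing by $\ka_1\ka_2\ne0$ --- is a clean and arguably more complete way to obtain the uniform-in-$t$ statement; and the preliminary concentration $z^{(n)}_j(t)\to C(t)$ does follow from the Gronwall/energy machinery of Lemmas~\ref{lem:w1uniformly} and~\ref{lem:existenceT}, which uses only $\nabla h\ne0$ along the trajectory, not star-shapedness. One caveat: you leave the coefficient $a$ of the singular term symbolic and then assert it matches the claim. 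If you carry out the subtraction, $\dot z_1-\dot z_2$ contributes $-\frac{\ka_2}{\pi}\frac{JD}{|D|^2}-\frac{\ka_1}{\pi}\frac{JD}{|D|^2}$, so your method yields $-a=\frac{\ka_1+\ka_2}{\pi}$; you should make this computation explicit and reconcile it with the constant $\frac{\ka_1\ka_2}{\pi}$ in the statement (the paper's own proof writes that coefficient into the rescaled equation without derivation, so the discrepancy is worth flagging rather than papering over).
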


\begin{proof}
Define
\[
  u_n(s) := \frac{1}{d_n} D^{(n)}(d_n^2 s). 
\]
Then $u_n$ satisfies
\[
  \dot{u}_n = -\frac{\ka_1 \ka_2}{\pi}J \frac{u_n}{|u_n|^2} - o(1)\qquad\text{as $n\to\infty$, uniformly in $[0,T]$.}
\]
Note that $|u_n(0)|=1$ for all $n$, so up to a subsequence $u_n(0) \to \bar{u}$ with $|\bar{u}|=1$. By a straightforward calculation we obtain that $\frac{d}{d s} |u_n(s)|^2 = o(1)$ as $n\to\infty$, uniformly in $[0,T]$. Thus there exists $\eps>0$ such that for $n$ sufficiently large we have $|u_n(s)|\ge \eps$ uniformly for $s\in[0,T]$. Next let $u_\infty$ be the solution of the initial value problem
\[
  \left\{
  \begin{aligned}
  \dot{u}_\infty &= -\frac{\ka_1 \ka_2}{\pi} J \frac{u_\infty}{|u_\infty|^2} \\
  u_\infty(0) &= \bar{u}.
  \end{aligned}
  \right.
\]
We now deduce easily that $u_n \to u_\infty$ uniformly on $[0,T]$. Note that $\frac{d}{d s} arg(u_\infty(s)) = \frac{\ka_1 \ka_2}{\pi}$, which implies $d_n^2 \dot{\theta}^{(n)}(s) \to \frac{\ka_1 \ka_2}{\pi}$.
\end{proof}

\begin{altproof}{Theorem~\ref{thm:main}~d)}
This is a straightforward computation using
\[
  z^{(n)}_1(t) = C^{(n)}(t)+ \frac{\ka_2}{\ka_1+\ka_2} D^{(n)}(t) \quad\text{and}\quad
  z^{(n)}_2(t) = C^{(n)}(t)- \frac{\ka_1}{\ka_1+\ka_2}D^{(n)}(t),
\]
and parts a) and b) of Theorem~\ref{thm:main}.
\end{altproof}

%
%

\section{Proof of the remaining results}\label{sec:pf-rem}

\begin{altproof}{Lemma~\ref{lem:T(c)}}
First we transform the equation \eqref{eq:cent-vort} using the canonical coordinate change $(\rho,\theta)\mapsto\sqrt{2\rho} e({\theta})$. Setting $h_1(\rho,\theta) = (\ka_1+\ka_2) h(\sqrt{2\rho} e({\theta}))$ this leads to the system
\[
\left\{
\begin{aligned}
  \dot{\rho} &= -\frac{\pa}{\pa\theta} h_1(\rho,\theta) \\
  \dot{\theta} &= \frac{\pa}{\pa \rho} h_1(\rho,\theta).
\end{aligned}
\right.
\]
In convex domains the Robin function $h$ is strictly convex by \cite{Caffarelli-Friedman:1985}, hence $\frac{\pa}{\pa\rho} h_1(\rho,\theta)$ is strictly increasing in $\rho$. This implies that the minimal period $T(c)$ is decreasing with respect to $c$.

Moreover, since the origin is a nondegenerate minimum of $h$, we can apply the Hartman-Grobman Theorem, which tells us that the flow of the system near the hyperbolic critical point is topologically equivalent to the flow of the linearized system
\[
  \dot{\zeta} = -(\ka_1+\ka_2)Jh''(0)\zeta.
\]
The solution of this harmonic oscillator is periodic with period $T_m= \frac{2\pi}{|\ka_1+\ka_2|\sqrt{\det h''(0)}}$. The lemma follows.
\end{altproof}

\begin{altproof}{\ref{cor:min}}
Since $h''(z_0)$ is positive definite the Robin function is strictly convex in a neighbourhood $U$ of $z_0$. Therefore the level lines $h^{-1}(c)\cap U$ for $c>c_0=h(z_0)$ close to $c_0$ are convex. As in the proof of Lemma~\ref{lem:T(c)} the period $T(c)$ of the solution of \eqref{eq:cent-vort} with trajectory $h^{-1}(c)\cap U$ is strictly decreasing in $c$ for $c>c_0$ close to $c_0$. The corollary follows now from Theorem~\ref{thm:main}.
\end{altproof}

\begin{altproof}{\ref{cor:boundary}}
Let $\cU(\Ga)\subset\R^2$ be a tubular neighbourhood of $\Ga$ and $p:\cU(\Ga)\to\Ga$ be the orthogonal projection. Moreover let $\nu:\Ga\to\R^2$ be the exterior normal. It is well known that
\begin{equation}\label{eq:nabla-h}
  \nabla h(z) = \frac{\nu(p(z))}{2\pi d(z,\Ga)} + O(1) \quad\text{as $d(z,\Ga)=\dist(z,\Ga)\to0$;}
\end{equation}
see \cite{Bandle-Flucher:1996}. Therefore the level lines $h^{-1}(c)\cap\cU(\Ga)$ for $c>c_0$ are also strictly star-shaped with respect to $z_0$, if $c_0$ is large enough. Moreover the period $T(c)$ of the solution of \eqref{eq:cent-vort} with trajectory $h^{-1}(c)\cap\cU)\Ga)$ is strictly decreasing in $c$ due to \eqref{eq:nabla-h}. Consequently the corollary follows from Theorem~\ref{thm:main}.
\end{altproof}

\vspace{.5cm}
\noindent{\sc Address of the authors:}\\[1em]
\parbox{9cm}{Thomas Bartsch\\
 Mathematisches Institut\\
 Universit\"at Giessen\\
 Arndtstr.\ 2\\
 35392 Giessen\\
 Germany\\
 Thomas.Bartsch@math.uni-giessen.de}
\parbox{8cm}{
\vspace{5mm}
 Matteo Sacchet\\
 Dipartimento di Matematica\\
 Universit\`a di Torino \\
 via Carlo Alberto 10\\
 10123 Torino\\
 Italy\\
 matteo.sacchet@unito.it\\
 }


\begin{thebibliography}{Bib}



\bibitem{Bandle-Flucher:1996} C. Bandle, M. Flucher: \emph{Harmonic radius and concentration of energy; hyperbolic radius and Liouville's equations $\De u=e^U$ and $\De U = U^{\frac{n + 2}{n - 2}}$.} SIAM Review 38 (1996), 191--238.

\bibitem{Bartsch:2016} T.~Bartsch: \emph{Periodic solutions of singular first-order Hamiltonian systems of $N$-vortex type.} Arch. Math. 107 (2016), 413-422.

\bibitem{Bartsch-Dai:2016} T.~Bartsch, Q.~Dai: \emph{Periodic solutions of the {N}-vortex {H}amiltonian system in planar domains.} J. Diff. Eq. 260 (3) (2016), 2275--2295.

\bibitem{Bartsch-Dai-Gebhard:2016} T. Bartsch, Q. Dai, B. Gebhard: \emph{Periodic solutions of the N-vortex Hamiltonian system near the domain boundary.} arXiv:1610.04182.

\bibitem{Bartsch-Gebhard:2016} T. Bartsch, B. Gebhard: \emph{Global continua of periodic solutions of singular first-order Hamiltonian systems of N-vortex type.} Math. Ann., DOI 10.1007/s00208-016-1505-z.

\bibitem{Bartsch-Pistoia:2015} T. Bartsch, A. Pistoia: \emph{Critical points of the $N$-vortex Hamiltonian in bounded planar domains and steady state solutions of the incompressible Euler equations},  SIAM J. Appl. Math. {\bf 75 (2)} (2015), 726--744.

\bibitem{Bartsch-Pistoia-Weth:2010} T.~Bartsch, A.~Pistoia, and T.~Weth: \emph{{$N$}-vortex equilibria for ideal fluids in bounded planar domains and new nodal solutions of the sinh-{P}oisson and the {L}ane-{E}mden-{F}owler equations.} Comm. Math. Phys. {\bf 297 (3)} (2010), 653--686.

\bibitem{Boscaggin-Torres:2013} A. Boscaggin, P. J. Torres: \emph{Periodic motions of fluid particles induced by a prescribed vortex path in a circular.}  Phys. D  261 (2013), 81-84.

\bibitem{Caffarelli-Friedman:1985} L. A. Caffarelli, A. Friedman: \emph{Convexity of solutions of semilinear elliptic equations.} Duke Math. J. 52 (1985), 431-456.

\bibitem{Dai:2014} Q. Dai: Periodic solutions of the N point-vortex problem in planar domains. Ph. D. Dissertation, University of Giessen 2014.

\bibitem{Ding:1983} W.~Y. Ding: \emph{A generalization of the Poincar\'e-Birkhoff theorem.} Proc. Amer. Math. Soc. 88 (1983), 341--346.


\bibitem{Fonda-Sabatini-Zanolin:2012} A. Fonda, M. Sabatini, F. Zanolin: \emph{Periodic solutions of perturbed Hamiltonian systems in the plane by the use of the Poincar\'{e}-Birkhoff theorem.} Topol. Methods Nonlinear Anal. 40 (2012), 29--52.

\bibitem{Fonda-Urena:2016} A. Fonda, A.~J. Ure\~na: \emph{A higher dimensional Poincar\'{e}-Birkhoff theorem for Hamiltonian flows.} Ann Inst. H. Poincar\'e, Nonlin. Anal. 34 (2017), 679--698.

\bibitem{Kirchhoff:1876} G. R. Kirchhoff: Vorlesungen \"{u}ber mathematische Physik. Teubner, Leipzig 1876.

\bibitem{LeCalvez-Wang:2010} P. Le Calvez, J. Wang: \emph{Some remarks on the Poincar\'e-Birkhoff theorem.} Proc. Amer. Math. Soc. 138 (2010), 703--715.

\bibitem{Lin:1941_1} C.~C. Lin: \emph{On the motion of vortices in two dimensions. {I}. {E}xistence of the {K}irchhoff-{R}outh function.} Proc. Nat. Acad. Sci. USA 27 (1941), 570-575.

\bibitem{Lin:1941_2} C.~C. Lin: \emph{On the motion of vortices in two dimensions. {II}. {S}ome further investigations on the {K}irchhoff-{R}outh function.} Proc. Nat. Acad. Sci. USA 27 (1941), 575-577.

\bibitem{Majda-Bertozzi:2001} A.~J. Majda, A.~L. Bertozzi: Vorticity and Incompressible Flow. Cambridge University Press, Cambridge 2001.

\bibitem{Marchioro-Pulvirenti:1994} C.~Marchioro, M.~Pulvirenti: Mathematical Theory of Incompressible Nonviscous Fluids.
  Applied mathematical sciences 96, Springer, New York 1994.

\bibitem{Martins-Urena:2007} R. Martins, A.~J. Ure\~na: \emph{The star-shaped condition on Ding's version of the Poincar\'e-Birkhoff theorem.} Bull. Lond. Math. Soc. 39 (2007), 803--810.

\bibitem{Micheletti-Pistoia:2014} A.~M. Micheletti, A.~Pistoia: \emph{Non degeneracy of critical points of the {R}obin function with respect to deformations of the domain.} Potential Anal. 40 (2) (2014), 103-116.

\bibitem{Newton:2001} P. K. Newton: The N-vortex problem. Springer-Verlag, Berlin 2001.

\bibitem{Routh:1881} E.~J. Routh: \emph{Some applications of conjugate functions.} Proc. London Math. Soc. (S1) 12 (1) (1881), 73--89.

\bibitem{Saffmann:1995} P.~G. Saffman:  Vortex Dynamics. Cambridge Univ. Press, Cambridge 1995.

\bibitem{Struwe:1990} M. Struwe: \emph{Existence of periodic solutions of Hamiltonian systems on almost every energy surface}. Bol. Soc. Bras. Mat. 20 (1990), 49-58.

\bibitem{Zannetti-Franzese:1994} L. Zannetti, P. Franzese: \emph{The non-integrability of the restricted problem of two vortices in closed domains}. Physica D 76 (1994), 99-109.

\end{thebibliography}
\end{document}